\pgfplotsset{every axis/.append style={
      label style={font=\scriptsize},
      tick label style={font=\scriptsize},
      legend style={font=\tiny\sf, row sep=-2pt},
      legend cell align={left}
    }}
\pgfplotsset{compat=1.18}
\title{A hybridizable discontinuous Galerkin method with transmission variables for time-harmonic electromagnetic problems\thanks{Distributed under \href{https://creativecommons.org/licenses/by/4.0/}{Creative Commons CC BY 4.0} license.
\funding{This work was funded in part by the \textit{ANR JCJC project WavesDG} (research grant ANR-21CE46-0010) and by the \textit{Agence de l'Innovation de Défense} [AID] through \textit{Centre Interdisciplinaire
d'Etudes pour la D\'efense et la S\'ecurit\'e} [CIEDS] (project 2022 ElectroMath). The authors thank the IDCS research facility of \textit{Ecole Polytechnique} for providing access to the \texttt{Cholesky} cluster.}}}
\author{A. E. Rappaport\thanks{POEMS, CNRS, Inria, ENSTA, Institut Polytechnique de Paris, 91120 Palaiseau, France 
  (\email{ari.rappaport@ensta.fr}, \email{axel.modave@ensta.fr}).}
\and T. Chaumont-Frelet\thanks{Inria Univ.~Lille and Laboratoire Paul Painlev\'e, 59655 Villeneuve-d'Ascq, France 
  (\email{theophile.chaumont@inria.fr}).}
\and A. Modave\footnotemark[2]}
\newcommand{\matalg}[1]{\boldsymbol{\mathsf{#1}}}
\newcommand{\MA}{\matalg{A}}
\newcommand{\MB}{\matalg{B}}
\newcommand{\MI}{\matalg{I}}
\newcommand{\MM}{\matalg{M}}
\newcommand{\MR}{\matalg{R}}
\newcommand{\MS}{\matalg{S}}
\newcommand{\MV}{\matalg{V}}
\newcommand{\MPi}{\matalg{\Pi}}
\newcommand{\mb}{\matalg{b}}
\newcommand{\me}{\matalg{e}}
\newcommand{\mf}{\matalg{f}}
\newcommand{\mg}{\matalg{g}}
\newcommand{\mh}{\matalg{h}}
\newcommand{\mmp}{\matalg{p}}
\newcommand{\mq}{\matalg{q}}
\newcommand{\mr}{\matalg{r}}
\newcommand{\ms}{\matalg{s}}
\newcommand{\mz}{\matalg{z}}
\newcommand{\prodV}[2]{\big({\textstyle{#1}},{\textstyle{#2}}\big)}
\newcommand{\prodS}[2]{\big\langle{\textstyle{#1}},{\textstyle{#2}}\big\rangle}
\renewcommand{\Re}{\operatorname{Re}}
\newcommand{\im}{\imath}
\newcommand{\grad}{\boldsymbol \nabla}
\newcommand{\curl}{\grad \times}
\newcommand{\BG}{\boldsymbol{G}}
\newcommand{\BJ}{\boldsymbol{J}}
\newcommand{\BL}{\boldsymbol{L}}
\newcommand{\BV}{\boldsymbol{V}}
\newcommand{\opS}{\mathbf{S}}
\newcommand{\opI}{\mathbf{I}}
\newcommand{\opPi}{\boldsymbol{\Pi}}
\newcommand{\bb}{\boldsymbol{b}}
\newcommand{\bd}{\boldsymbol{d}}
\newcommand{\be}{\boldsymbol{e}}
\newcommand{\bg}{\boldsymbol{g}}
\newcommand{\bh}{\boldsymbol{h}}
\newcommand{\bn}{\boldsymbol{n}}
\newcommand{\bq}{\boldsymbol{q}}
\newcommand{\bs}{\boldsymbol{s}}
\newcommand{\bv}{\boldsymbol{v}}
\newcommand{\bw}{\boldsymbol{w}}
\newcommand{\bx}{\boldsymbol{x}}
\newcommand{\bzero}{\mathbf{0}}
\newcommand{\CF}{\mathcal{F}}
\newcommand{\CK}{\mathcal{K}}
\newcommand{\CP}{\mathcal{P}}
\newcommand{\CT}{\mathcal{T}}
\newcommand{\BCP}{\boldsymbol{\CP}}
\newcommand{\bpi}{\boldsymbol{\pi}}
\newcommand{\bxi}{\boldsymbol{\xi}}
\newcommand{\rmsub}[1]{_{\mathrm{#1}}}
\newcommand{\rmsup}[1]{^{\mathrm{#1}}}
\newcommand{\ntimes}[1]{\bn\times #1}
\newcommand{\ntimesKF}[1]{\bn_{K,F}\times #1}
\begin{document}

\maketitle

\begin{abstract}
  \noindent
  The CHDG method is a hybridizable discontinuous Galerkin (HDG) finite element method suitable for the iterative solution of time-harmonic wave propagation problems.
  Hybrid unknowns corresponding to transmission variables are introduced at the element interfaces and the physical unknowns inside the elements are eliminated, resulting in a hybridized system with favorable properties for fast iterative solution.
  In this paper, we extend the CHDG method, initially studied for the Helmholtz equation, to the time-harmonic Maxwell equations.
  We prove that the local problems stemming from hybridization are well-posed and that the fixed-point iteration naturally associated to the hybridized system is contractive.
  We propose a 3D implementation with a discrete scheme based on nodal basis functions.
  The resulting solver and different iterative strategies are studied with several numerical examples using a high-performance parallel \texttt{C++} code.
\end{abstract}

\begin{keywords}
  Maxwell equations,
  Discontinuous finite elements,
  High-order methods,
  Hybridization,
  Iterative solvers
\end{keywords}
  
\begin{MSCcodes}
  65F08, 65N22, 65N30
\end{MSCcodes}

\section{Introduction}

The numerical solution of the time-harmonic Maxwell equations is fundamental for simulating electromagnetic wave propagation, with applications ranging from antenna design to photonics and radar systems.
In this context, finite element methods are well established as a robust and flexible solution framework, offering advantages such as high-order accuracy and geometric flexibility.
However, fine meshes may be required due to pollution effects.
For this class of problems, the finite element methods lead to large, sparse, complex-valued linear systems, which are difficult to solve with direct and iterative solution methods.
On the one hand, direct approaches are expensive and difficult to parallelize.
On the other hand, iterative strategies may require many iterations due to intrinsic properties of the problem.

Discontinuous Galerkin (DG) methods have been developed and analyzed for both time-domain
and time-harmonic electromagnetic problems, see e.g.~\cite{hesthaven2007nodal, klockner2009nodal,
pernet2021discontinuous} and \cite{PerugiaSchotzauMonk2002, chaumontfrelet2025, PerugiaSchotzau2003, Buffa2006}, respectively.
From a practical point of view, the cell local nature of the degrees of freedom (DOFs) in DG methods leads to access patterns that are favorable to high performance computing and in particular, to GPUs \cite{klockner2009nodal, hesthaven2007nodal}.
However, the number of unknowns in the resulting linear system is larger than for a corresponding continuous Galerkin counterpart, and solving these systems for time-harmonic application problems is still prohibitively expensive.

Several specialized methods have emerged to address these challenges and have been applied specifically to the time-harmonic Maxwell equations, namely hybridizable DG (HDG) methods \cite{nguyen2011hybridizable, li2013numerical, camargo2020hdg} and Trefftz approaches \cite{Huttenen2007,Hiptmair2013,fure2020discontinuous,Pernet2023}.
The HDG methods consists in introducing hybrid unknowns at the element interfaces and eliminating the physical unknowns.
In the Trefftz approach, basis functions tailored for the considered problem are used.
In both cases, the number of unknowns are reduced (sometimes drastically).
Recent studies have also focused on advanced preconditioning techniques and domain decomposition methods to improve the scalability of DG-based solvers, see, e.g., \cite{Dolean2008, Dolean2015, ElBouajaji2012, li2014hybridizable, he2016optimized} and references therein.
This is still an active area of research and the interplay between the discretization and the preconditioning strategies is very
much intertwined.

In the recent work \cite{modave2023hybridizable}, a HDG variant, termed CHDG, is studied in the
context of Helmholtz problems. The key idea is to reformulate the local DG flux in terms of the
so-called transmission variables (incoming and outgoing) and then to introduce skeleton unknowns
based on these variables. Then, an elimination process similar to the one in HDG allows to solve
only for the incoming variable. The method naturally results in a system of the form $(\opI -
\opPi\opS)\bg = \bb$ that can be solved with a fixed-point iteration, similar to the ultra weak
variational formulation (UWVF) \cite{Despres1998}. Numerical results indicate that standard
Krylov solvers such as CGNR or GMRES require fewer iterations with CHDG than with other
discretization methods.

In this paper, we extend CHDG to the time-harmonic Maxwell equations with constant coefficients, and we propose a 3D implementation based on nodal basis functions.
Appropriate transmission variables are defined in this setting, and the good properties obtained in \cite{modave2023hybridizable} are recovered.
In particular, the element-local problems associated with the hybridization are well-posed, and the fixed-point iteration is a contraction.
We give implementation details for the resulting scheme with a strategy based on nodal basis functions, which has advantages for the practical parallel implementation.
Specific preconditioned CGNR and GMRES iterations of the hybridized system are discussed, which in practice lead to faster convergence.

The rest of this paper is organized as follows.
In \S\ref{sect:DG-hybrid} we introduce the mathematical model, the standard DG method, the transmission variables, and the hybridization technique.
In \S\ref{sect:analysis} the local and hybridized problems are analyzed.
In \S\ref{sect:disc_local} we present the discrete systems with implementation details, and we discuss linear solver strategies for the hybridized problem.
In \S\ref{sect:numerics} we present numerical results on a suite of benchmarks that both validate code correctness and compare the solver strategies on reference problems.
All experiments are conducted in 3D with a parallel \texttt{C++} code.
Finally, we present conclusion and future perspectives in \S\ref{sect:conclusion}.

\section{Discontinuous Galerkin method and hybridization}
\label{sect:DG-hybrid}

Let $\Omega\subset\mathbb{R}^3$ be a domain with a polyhedral boundary partitioned into three mutually disjoint polytopal Lipschitz subsets, i.e.~$\partial\Omega = \Gamma\rmsub{E}\cup\Gamma\rmsub{H}\cup\Gamma\rmsub{I}$.
In this paper, we consider the time-harmonic Maxwell equations completed with standard boundary conditions
\begin{align}
  \left\{
  \begin{aligned}
    \im\kappa\be  - \curl\bh
     & = \bzero       &  & \text{in }\Omega,          \\
    \im\kappa\bh  + \curl\be
     & = \bzero       &  & \text{in }\Omega,          \\
    \bn\times\be
     & = \bs\rmsub{E} &  & \text{on }\Gamma\rmsub{E}, \\
    \bn\times\bh
     & = \bs\rmsub{H} &  & \text{on }\Gamma\rmsub{H}, \\
    -\bn\times(\bn\times\be) +  \bn\times\bh
     & = \bs\rmsub{I} &  & \text{on }\Gamma\rmsub{I},
  \end{aligned}
  \right.
  \label{eq:maxwell_system}
\end{align}
where the unknowns $\be:\Omega\to\mathbb{C}^3$ and $\bh:\Omega\to\mathbb{C}^3$
represent the electric and magnetic field respectively, $\im$ is the imaginary unit,
$\kappa > 0$ represents the (constant) wave number, $\bn$ is the outward
unit normal vector with respect to $\partial\Omega$, and
$\bs\rmsub{E}$, $\bs\rmsub{E}$ and $\bs\rmsub{E}$ are given surface sources. Volumetric sources are
easily handled as well.

\subsection{Discretization and notation}

We consider a conforming mesh $\CT_h$ of the domain $\Omega$ consisting of simplicial
elements $K$.
The collection of element boundaries is denoted by
$\partial\CT_h := \{\partial K \:|\: K\in\CT_h\}$,
the collection of faces of the mesh is denoted by $\CF_h$,
and the collection of faces of an element $K$ is denoted by $\CF_K$.

The numerical physical fields, denoted by $\be_h$ and $\bh_h$, belong to the broken space
\begin{displaymath}
  \BV_h := \prod_{K\in\CT_h} \BCP_p(K),
\end{displaymath}
where $p \geq 0$ is a given polynomial degree and $\BCP_p(\cdot)$ is the space
of complex vector-valued polynomials of degree smaller than or equal to $p$.
For a face $F$ of an element $K$, we shall use the \emph{tangential trace}
and \emph{tangential component trace}, which, for a vector-valued polynomial $\bv_K\in\BCP_p(K)$, are defined as
\begin{align}
  \bn_{K,F}\times\bv_K
  \quad\text{and}\quad
  \bpi_{K,F}^t(\bv_K) := -\bn_{K,F}\times(\bn_{K,F}\times\bv_K),
  \label{eq:tan_trace_defn}
\end{align}
where $\bn_{K,F}$ denotes the outward unit normal to $F$.
These traces verify the identity $\bn_{K,F}\times\bpi_{K,F}^t(\bv_K) = \ntimesKF{\bv_K}$,
and they belong to the \emph{tangential trace space}
\begin{align}
  \BCP_p^t(F) := \{\bv\in\BCP_p(F) \:|\: \bn_F\cdot\bv = 0\}.
  \label{eq:tantrace_defn}
\end{align}
Let us note that the traces \eqref{eq:tan_trace_defn} can be applied
to any tangential vector-valued polynomial $\bv^t_F\in\BCP_p^t(F)$ without ambiguities.
The orientation of the normal $\bn_F$ depends on the context.

We introduce the following sesquilinear forms for complex vector-valued functions
\begin{displaymath}
  \begin{aligned}
    \prodV{\bv}{\bw}_{K}             & := \int_K \bv\cdot\overline{\bw}\:d\bx,           &
    \prodV{\bv}{\bw}_{\CT_h}         & := \sum_{K\in\CT_h} \prodV{\bv}{\bw}_{K},
    \\
    \prodS{\bv}{\bw}_{F}             & := \int_F \bv\cdot\overline{\bw}\:d\sigma(\bx),   &
    \prodS{\bv}{\bw}_{\CF_h}         & := \sum_{F\in\CF_h} \prodS{\bv}{\bw}_{F},
    \\
    \prodS{\bv}{\bw}_{\partial K}    & := \sum_{F\in\CF_K} \prodS{\bv}{\bw}_{F},         &
    \prodS{\bv}{\bw}_{\partial\CT_h} & := \sum_{K\in\CT_h} \prodS{\bv}{\bw}_{\partial K}.
  \end{aligned}
\end{displaymath}
By convention, the quantities used in the surface integral $\prodS{\cdot}{\cdot}_{\partial K}$
are associated to the faces of $K$, e.g.~$\bn_{K,F}$ and $\bpi_{K,F}^t(\bv_K)$ with $F\in\CF_K$.
The subscripts $K$ and $F$ are dropped when there is no danger of ambiguity from context.
For $\bv,\bw\in\BCP_p(K)$, we have the integration by part formula
\begin{displaymath}
  \prodV{\curl\bw}{\bv}_K - \prodV{\bw}{\curl\bv}_K =
  \prodS{\bn\times\bw}{\bpi^t(\bv)}_{\partial K}
\end{displaymath}
and the following identities
\begin{displaymath}
  \begin{aligned}
    \prodS{\bpi_{K,F}^t(\bv)}{\ntimesKF{\bw}}_{F} & =
    -\prodS{\ntimesKF{\bv}}{\bpi_{K,F}^t(\bw)}_{F},
    \\
    \prodS{\ntimesKF{\bv}}{\ntimesKF{\bw}}_{F}    & =
    \prodS{\bpi_{K,F}^t(\bv)}{\bpi_{K,F}^t(\bw)}_{F}.
  \end{aligned}
\end{displaymath}

\subsection{Standard DG formulation with upwind fluxes}

The standard DG formulation for the time-harmonic Maxwell equations is given by
\begin{problem}
\label{pbm:standardDG}
Find $(\be_h,\bh_h) \in \BV_h\times\BV_h$ such that, for all $(\bv_h,\bw_h) \in
  \BV_h\times\BV_h$,
\begin{displaymath}
  \left\{
  \begin{aligned}
    \im\kappa\prodV{\be_h}{\bv_h}_{\CT_h}
    - \prodV{\bh_h}{\curl \bv_h}_{\CT_h}
    - \prodS{\bn\times\widehat{\bh}^t(\be_h,\bh_h)}{\bpi^t(\bv_h)}_{\partial\CT_h}
     & = 0, \\
    \im\kappa\prodV{\bh_h}{\bw_h}_{\CT_h}
    + \prodV{\be_h}{\curl \bw_h}_{\CT_h}
    + \prodS{\bn\times\widehat{\be}^t(\be_h,\bh_h)}{\bpi^t(\bw_h)}_{\partial\CT_h}
     & = 0,
  \end{aligned}
  \right.
\end{displaymath}
with the \emph{numerical fluxes} $\bn\times\widehat{\bh}^t(\be_h,\bh_h)$ and
$\bn\times\widehat{\be}^t(\be_h,\bh_h)$ defined below.
\end{problem}
We choose \textit{upwind fluxes}, see e.g.~\cite{hesthaven2007nodal, pernet2021discontinuous}.
For each interior face $F\not\subset\partial\Omega$ of each element $K$, they are given by
\begin{subequations}
  \begin{align}
    \left\{
    \begin{aligned}
      \ntimesKF{\widehat{\be}_F^t}
       & := \ntimesKF{(\be_K + \be_{K'})/2}
      +  \bpi_{K,F}^t (\bh_K - \bh_{K'})/2,
      \\
      \ntimesKF{\widehat{\bh}_F^t}
       & := \ntimesKF{(\bh_K + \bh_{K'})/2}
      -  \bpi_{K,F}^t (\be_K - \be_{K'})/2,
    \end{aligned}
    \right.
    \label{eq:upwind_flux_int}
  \end{align}
  where $K'$ is the neighboring element of $K$ sharing the face $F$.
  For a boundary face $F$, they are defined as
  \begin{align}
     &
    \left\{
    \begin{aligned}
      \bn_{K,F}\times\widehat{\be}_F^t
       & := \bs\rmsub{E}
      \\
      \bn_{K,F}\times\widehat{\bh}_F^t
       & := \ntimesKF{\bh_K} - \left(\bpi_{K,F}^t(\be_K) + \bn_{K,F}\times\bs\rmsub{E}\right)
    \end{aligned}
    \right.
     &
     & \text{if }F\subset\Gamma\rmsub{E},
    \label{eq:dg_fluxes_E}
    \\
     &
    \left\{
    \begin{aligned}
      \bn_{K,F}\times\widehat{\be}_F^t
       & := \bn_{K,F}\times\be_K +  \left(\bpi_{K,F}^t(\bh_K) + \bn_{K,F}\times\bs\rmsub{H}\right)
      \\
      \bn_{K,F}\times\widehat{\bh}_F^t
       & := \bs\rmsub{H}
    \end{aligned}
    \right.
     &
     & \text{if }F\subset\Gamma\rmsub{H},
    \label{eq:dg_fluxes_H}
    \\
     &
    \left\{
    \begin{aligned}
      \ntimesKF{\widehat{\be}_F^t}
       & := \ntimesKF{\left(\bpi_{K,F}^t(\be_K) -  \bn_{K,F}\times\bh_K + \bs\rmsub{I}\right)/2}
      \\
      \bn_{K,F}\times\widehat{\bh}_F^t
       & := - \left(\bpi_{K,F}^t(\be_K) -  \bn_{K,F}\times\bh_K - \bs\rmsub{I}\right)/2
    \end{aligned}
    \right.
     &
     & \text{if }F\subset\Gamma\rmsub{I}.
    \label{eq:dg_fluxes_I}
  \end{align}
\end{subequations}
These numerical fluxes satisfy
\begin{displaymath}
  \begin{aligned}
    \bpi_{K,F}^t(\widehat{\be}_F^t) - \bn_{K,F}\times\widehat{\bh}_F^t
     & = \bpi_{K,F}^t(\be_K) - \ntimesKF{\bh_K},      \\
    \bn_{K,F}\times\widehat{\be}_F^t + \bpi_{K,F}^t(\widehat{\bh}_F^t)
     & = \bn_{K,F}\times\be_K + \bpi_{K,F}^t(\bh_K),
  \end{aligned}
\end{displaymath}
regardless of whether $F$ is an interior or boundary face.

\subsection{Transmission variables}

The definition of the upwind fluxes is related to the characteristic analysis of the system, which exhibits quantities transported through the faces of the elements.

We consider the time-dependent Maxwell equations
\begin{displaymath}
  \left\{
  \begin{aligned}
    \frac{\partial \be}{\partial t} - \curl{\bh} & = \bzero, \\
    \frac{\partial \bh}{\partial t} + \curl{\be} & = \bzero.
  \end{aligned}
  \right.
\end{displaymath}
Assuming only the variation with respect to a constant direction $\bn$ leads to
system
\begin{displaymath}
  \left\{
  \begin{aligned}
    \frac{\partial \be}{\partial t}
    - \bn\times\frac{\partial\bh}{\partial n}
     & = \bzero,
    \\
    \frac{\partial \bh}{\partial t}
    + \bn\times\frac{\partial\be}{\partial n}
     & = \bzero.
  \end{aligned}
  \right.
\end{displaymath}
Applying $-\bn\times(\bn\times\cdot)$ and $-(\bn\times\cdot)$ to the first and second equations, respectively, and combining the resulting equations yields the following decoupled transport equations
\begin{displaymath}
  \frac{\partial \bq^\pm}{\partial t} \pm \frac{\partial\bq^\pm}{\partial n} = \bzero,
\end{displaymath}
where the transported quantities are
$\bq^{\pm} := -\bn\times(\bn\times\be)\mp \bn\times\bh$,
which are frequently called the \emph{characteristic variables}.

\begin{figure}[!tb]
  \centering
  \includegraphics{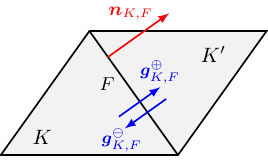}
  \caption{Notation for the outgoing and incoming transmission variables at the face $F$ shared by an element $K$ and a neighboring element $K'$.
    }
  \label{fig:notation}
\end{figure}

The upwind fluxes can be rewritten by introducing transmission variables related to the quantities transported through the faces of the elements.
At each face $F$ of an element $K$, we define the \emph{outgoing transmission variable} as
\begin{subequations}
  \begin{align}
    \bg^{\oplus}_{K,F}
     & := \bpi_{K,F}^t(\be_K) - \ntimesKF{\bh_K},
    \label{eq:transvar_defs_p}
  \end{align}
  and the \emph{incoming transmission variable} as
  \begin{align}
    \bg_{K,F}^\ominus
    :=
    \begin{cases}
      \bg^{\oplus}_{K',F},
       & \text{if }F\not\subset\partial\Omega\text{ is shared by } K\text{ and }K',
      \\
      - \bg_{K,F}^\oplus -2\bn_{K,F}\times\bs\rmsub{E},
       & \text{if }F\subset\Gamma\rmsub{E},
      \\
      \bg_{K,F}^\oplus + 2\bs\rmsub{H},
       & \text{if }F\subset\Gamma\rmsub{H},
      \\
      \bs\rmsub{I},
       & \text{if }F\subset\Gamma\rmsub{I}.
    \end{cases}
    \label{eq:transvar_defs_m}
  \end{align}
\end{subequations}
This notation is illustrated in Figure~\ref{fig:notation}.
By using the transmission variables \eqref{eq:transvar_defs_p}-\eqref{eq:transvar_defs_m}, the numerical fluxes \eqref{eq:upwind_flux_int}-\eqref{eq:dg_fluxes_I} can be rewritten as
\begin{align}
  \left\{
  \begin{aligned}
    \bn_{K,F}\times \widehat{\be}_F^t
     & = \bn_{K,F}\times(\bg_{K,F}^{\oplus} + \bg_{K,F}^{\ominus})/2,
    \\
    \bn_{K,F}\times\widehat{\bh}_F^t
     & = -(\bg_{K,F}^{\oplus} - \bg_{K,F}^{\ominus})/2,
  \end{aligned}
  \right.
  \label{eq:dg_flux_trans}
\end{align}
for any face $F\in\mathcal{F}_K$.

For an interior face ($F\not\subset\partial\Omega$), the outgoing variable of one side corresponds to the incoming one of the other side, i.e.~$\bg_{K,F}^{\oplus}=\bg_{K',F}^{\ominus}$ and $\bg_{K,F}^{\ominus}=\bg_{K',F}^{\oplus}$.
For a boundary face, the boundary condition is prescribed by a specific definition of the incoming variable.
For example, for a condition on the tangential trace of a field ($F\subset\Gamma\rmsub{E}$ or $\Gamma\rmsub{H}$), the incoming variable is such that the numerical flux corresponding to the field is equal to the surface source.

\subsection{Hybridization}

In the standard hybridization approach, a new variable corresponding to the numerical trace
associated to one of the field is introduced, see e.g.~\cite{li2013numerical, li2014hybridizable}
and references therein.
This variable, denoted by $\boldsymbol{\zeta}_h$, belongs to the space of the tangential vector-valued polynomials defined on the faces of the mesh,
\begin{displaymath}
  \widehat{\BV}_h^t := \prod_{F\in\CF_h}\BCP_p^t(F).
\end{displaymath}
It can be defined such that $\bn_{K,F}\times\boldsymbol{\zeta}_{F} := \bn_{K,F}\times\widehat{\bh}_F^t$ for each face $F$ of each element $K$.
The unknowns associated with the physical fields can then be eliminated from the system, resulting in a hybridized system defined on the skeleton.

As in \cite{modave2023hybridizable}, we propose an alternative approach where the new variable, denoted by $\bg^\ominus_h$, corresponds to the incoming transmission variable.
It belongs to the space
\begin{align}
  \BG_h^t := \prod_{K\in\CT_h}\prod_{F\in\CF_K}\BCP_p^t(F).
  \label{eq:Ghat_defn}
\end{align}
By contrast with the standard approach, there are two tangential vector values for each interior face instead of one, i.e.~$\bg^\ominus_{K,F}$ and $\bg^\ominus_{K',F}$ for a face $F$ shared by neighboring elements $K$ and $K'$.
There is still one value for each boundary face.

The CHDG formulation is obtained by introducing the new variable $\bg^\ominus_h$ in Problem~\ref{pbm:standardDG}, and by using \eqref{eq:dg_flux_trans}.
To write the formulation, we define the \textit{global exchange operator} $\opPi:\BG_h^t \to\BG_h^t$ given, for each face $F$ of each element $K$, by
\begin{align}
  \opPi(\bg^\oplus)|_{K,F} :=
  \begin{cases}
    \bg_{K',F}^{\oplus} & \text{if } F\not\subset\partial\Omega\text{ is shared by } K\text{ and }K',
    \\
    -\bg_{K,F}^{\oplus} & \text{if } F\subset \Gamma\rmsub{E},
    \\
    \bg_{K,F}^{\oplus}  & \text{if } F\subset \Gamma\rmsub{H},
    \\
    \bzero              & \text{if } F \subset\Gamma\rmsub{I},
  \end{cases}
  \label{eq:exchange_operator}
\end{align}
for any $\bg^\oplus\in\BG_h^t$. We also define the \textit{global right-hand side} $\bb$
given, for each face $F$ of each element $K$, by
\begin{displaymath}
  \bb|_{K,F} :=
  \begin{cases}
    \bzero                        & \text{if } F\not\subset\partial\Omega, \\
    -2\bn_{K,F}\times\bs\rmsub{E} & \text{if } F\subset\Gamma\rmsub{E},    \\
    2\bs\rmsub{H}                 & \text{if } F\subset\Gamma\rmsub{H},    \\
    \bs\rmsub{I}                  & \text{if } F\subset\Gamma\rmsub{I}.    \\
  \end{cases}
\end{displaymath}
The CHDG formulation then reads as follows.
\begin{problem}[CHDG formulation]
\label{pbm:CHDG}
Find $(\be_h,\bh_h, \bg_h^{\ominus}) \in \BV_h\times\BV_h\times\BG_h^t$
such that, for all $(\bv_h,\bw_h, \bxi_h) \in
  \BV_h\times\BV_h\times\BG_h^t$,
\begin{displaymath}
  \left\{
  \begin{aligned}
    \im\kappa\prodV{\be_h}{\bv_h}_{\CT_h}
    - \prodV{\bh_h}{\curl \bv_h}_{\CT_h}
    - \prodS{-\tfrac{1}{2}(\bg^{\oplus}(\be_h,\bh_h) - \bg_h^{\ominus})}{\bpi^t\bv_h}_{\partial\CT_h}
     & = 0, \\
    \im\kappa\prodV{\bh_h}{\bw_h}_{\CT_h}
    + \prodV{\be_h}{\curl \bw_h}_{\CT_h}
    + \prodS{\tfrac{1}{2}\bn\times(\bg^{\oplus}(\be_h,\bh_h) + \bg_h^{\ominus})}{\bpi^t\bw_h}_{\partial\CT_h}
     & = 0,
  \end{aligned}
  \right.
\end{displaymath}
and
\begin{align}
  \prodS{\bg_h^{\ominus} - \opPi(\bg^{\oplus}(\be_h,\bh_h))}{\bxi_h}_{\partial\CT_h} =
  \prodS{\bb}{\bxi_h}_{\partial\CT_h},
  \label{eq:chdg_bdry}
\end{align}
with $\bg^\oplus(\be_h,\bh_h) := \bpi^t(\be_h) - \ntimes{\bh_h}$.
\end{problem}

After eliminating the physical variables, we are left with solving \eqref{eq:chdg_bdry} on the skeleton with only the variable $\bg^\ominus_h$ as unknown.
The elimination implicitly requires the solution of local element-wise problems, where the incoming transmission variable is considered data.
The local problem for an element $K$ is given by
\begin{problem}[Local problem]
\label{pbm:CHDG_local}
Find
$(\be_K,\bh_K) \in \BCP_p(K)\times\BCP_p(K)$
such that, for all
$(\bv_K,\bw_K) \in \BCP_p(K)\times\BCP_p(K)$,
\begin{align}
  \left\{
  \begin{aligned}
     & \im\kappa\prodV{\be_K}{\bv_K}_K
    - \prodV{\bh_K}{\curl \bv_K}_K
    + \sum_{F\in\CF_K}\prodS{\frac{1}{2}\bg^{\oplus}_{K,F}(\be_K, \bh_K)}{\bpi_{K,F}^t(\bv_K)}_{F}
    \\
     & \qquad
    = \sum_{F\in\CF_K}\prodS{\frac{1}{2}\bg^{\ominus}_{K,F}}{\bpi_{K,F}^t(\bv_K)}_{F},
    \\
     & \im\kappa\prodV{\bh_h}{\bw_K}_{K}
    + \prodV{\be_K}{\curl \bw_K}_{K}
    + \sum_{F\in\CF_K}\prodS{\frac{1}{2}\bn_{K,F}\times\bg^{\oplus}_{K,F}(\be_K, \bh_K)}{\bpi_{K,F}^t(\bw_K)}_{F}
    \\
     & \qquad
    = - \sum_{F\in\CF_K}\prodS{\frac{1}{2}\bn_{K,F}\times\bg^{\ominus}_{K,F}}{\bpi_{K,F}^t(\bw_K)}_{F},
  \end{aligned}
  \right.
  \label{eq:chdg:localSys}
\end{align}
with $\bg^\oplus_{K,F}(\be_K,\bh_K) := \bpi_{K,F}^t(\be_K) - \ntimesKF{\bh_K}$, for given data $\bg^\ominus_{K,F}\in\BCP_p^t(F)$, for all $F\in\CF_K$.
\end{problem}

The global problem resulting from the elimination of the physical variables is called the \textit{hybridized problem}.
This problem can be written in a convenient abstract form by introducing the \textit{global scattering operator} $\opS:\BG_h^t\to\BG_h^t$ satisfying, for each face $F$ of each element $K$,
\begin{align}
  \label{eq:scatter_operator}
  \opS(\bg^\ominus_h)|_{K,F} := \bpi^t_{K,F}(\be_K(\bg^\ominus_h)) -
  \bn_{K,F}\times\bh_K(\bg^\ominus_h),
\end{align}
where $(\be_K,\bh_K)$ is a solution of the local problem \ref{pbm:CHDG_local} with $(\bg^\ominus_h|_{K,F})_{F\in\CF_K}$ as surface data.
The application of $\opS$ involves the solution of all the local problems.
The hybridized problem can then be written as
\begin{problem}[Hybridized problem \texttt{I}]
\label{pbm:trans_weak}
Find $\bg^\ominus_h\in\BG_h^t$ such that, for all $\bxi_h\in\BG_h^t$,
\begin{displaymath}
  \prodS{\bg_h^\ominus - \opPi(\opS(\bg_h^\ominus))}{\bxi_h}_{\CT_h} =
  \prodS{\bb}{\bxi_h}_{\CT_h}.
\end{displaymath}
\end{problem}
The problem can be written in operator form by introducing the \textit{global
  projected right-hand side} $\bb_h$ is the $\BL^2$-orthogonal projection of $\bb$ satisfying $\prodS{\bb_h}{\bxi_h}_{\partial\CT_h} =
  \prodS{\bb}{\bxi_h}_{\partial\CT_h}$ for all $\bxi_h\in\BG_h^t$.
We then obtain
\begin{problem}[Hybridized problem \texttt{II}]
\label{pbm:trans_operator}
Find $\bg_h^\ominus\in\BG_h^t$ such that
\begin{align}
  (\opI - \opPi\opS)\bg_h^\ominus = \bb_h.
\label{eqn:trans_operator}
\end{align}
\end{problem}
Problems~\ref{pbm:trans_weak} and \ref{pbm:trans_operator} are equivalent to
Problem~\ref{pbm:CHDG} because the element-wise problems are well-posed, see below.

\section{Analysis of the CHDG problems}
\label{sect:analysis}

In this section, we extend the results obtained in \cite{modave2023hybridizable} for the Helmholtz equation to the time-harmonic Maxwell equations.

\subsection{Local problem}

We show that the local problems (Problem~\ref{pbm:CHDG_local}) are well-posed, which authorizes the proposed hybridization.
\begin{theorem}
  \label{thm:wellposed}
  Problem~\ref{pbm:CHDG_local} is well-posed.
\end{theorem}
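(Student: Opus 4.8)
The plan is to use that Problem~\ref{pbm:CHDG_local} is a \emph{square} linear system on the finite-dimensional space $\BCP_p(K)\times\BCP_p(K)$; hence well-posedness reduces to uniqueness, and it suffices to show that the only solution with homogeneous data $\bg^\ominus_{K,F}=\bzero$ for all $F\in\CF_K$ is $(\be_K,\bh_K)=(\bzero,\bzero)$.

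The first and main step is an energy identity obtained by testing the first equation of \eqref{eq:chdg:localSys} with $\bv_K=\be_K$ and the second with $\bw_K=\bh_K$. To deal cleanly with the conjugate-linearity of $\prodV{\cdot}{\cdot}_K$ and $\prodS{\cdot}{\cdot}_F$, I would add the first equation to the \emph{complex conjugate} of the second. The two volume curl terms then combine, via the integration by parts formula, into the single skeleton term $\prodS{\ntimesKF{\bh_K}}{\bpi_{K,F}^t(\be_K)}_{\partial K}$, while the mass terms yield $\im\kappa\big(\prodV{\be_K}{\be_K}_K-\prodV{\bh_K}{\bh_K}_K\big)$, which is purely imaginary. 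Substituting $\bg^\oplus_{K,F}=\bpi_{K,F}^t(\be_K)-\ntimesKF{\bh_K}$ and repeatedly invoking the two tangential-trace identities (the skew-adjointness $\prodS{\bpi_{K,F}^t(\bv)}{\ntimesKF{\bw}}_F=-\prodS{\ntimesKF{\bv}}{\bpi_{K,F}^t(\bw)}_F$ and the isometry $\prodS{\ntimesKF{\bv}}{\ntimesKF{\bw}}_F=\prodS{\bpi_{K,F}^t(\bv)}{\bpi_{K,F}^t(\bw)}_F$), I expect the six boundary contributions to collapse, face by face, to the nonnegative quantity $\tfrac12\big(\prodS{\bpi_{K,F}^t(\be_K)}{\bpi_{K,F}^t(\be_K)}_F+\prodS{\ntimesKF{\bh_K}}{\ntimesKF{\bh_K}}_F\big)$. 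Taking the real part then kills the imaginary mass term and forces $\bpi_{K,F}^t(\be_K)=\bzero$ and $\ntimesKF{\bh_K}=\bzero$ on every face; using $\ntimesKF{\bpi_{K,F}^t(\bv)}=\ntimesKF{\bv}$ and the isometry, all four tangential traces $\ntimesKF{\be_K}$, $\bpi_{K,F}^t(\be_K)$, $\ntimesKF{\bh_K}$, $\bpi_{K,F}^t(\bh_K)$ vanish on $\partial K$.

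With all tangential traces vanishing, $\bg^\oplus_{K,F}=\bzero$, so all face terms in \eqref{eq:chdg:localSys} disappear. Integrating by parts back, the reduced volume equations become $\prodV{\im\kappa\be_K-\curl\bh_K}{\bv_K}_K=0$ and $\prodV{\im\kappa\bh_K+\curl\be_K}{\bw_K}_K=0$ for all test functions; since the residuals themselves lie in $\BCP_p(K)$, this yields the strong relations $\im\kappa\be_K=\curl\bh_K$ and $\im\kappa\bh_K=-\curl\be_K$ in $K$. Taking divergences gives $\div\be_K=\div\bh_K=0$, and eliminating $\be_K$ produces $\curl\curl\bh_K=\kappa^2\bh_K$, which with $\div\bh_K=0$ and the identity $\curl\curl\bh_K=-\grad\cdot\grad\,\bh_K+\grad(\div\bh_K)$ becomes the vector Helmholtz equation $\Delta\bh_K+\kappa^2\bh_K=\bzero$.

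The finishing observation is purely algebraic and specific to the discrete setting: $\bh_K$ is a polynomial, so $\Delta\bh_K$ has strictly smaller degree than $\bh_K$ whenever $\bh_K\neq\bzero$, whereas $-\kappa^2\bh_K$ has the same degree as $\bh_K$; since $\kappa>0$, the equation $\Delta\bh_K=-\kappa^2\bh_K$ forces $\bh_K=\bzero$, and then $\be_K=\curl\bh_K/(\im\kappa)=\bzero$. I expect the delicate part to be entirely in the second paragraph, namely correctly orchestrating the conjugation and collapsing the boundary contributions into a positive sum of squares via the trace identities; once the traces are shown to vanish, the strong-form reduction and the degree argument are short.
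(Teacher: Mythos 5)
Your proof is correct, and its main energy step coincides with the paper's: you test with $\bv_K=\be_K$, $\bw_K=\bh_K$, and your device of adding the first equation to the complex conjugate of the second and taking real parts is algebraically identical to the paper's addition of the four equations (the two originals plus their conjugated, integrated-by-parts versions) --- both collapse the cross terms via the skew-adjointness and isometry identities and yield $\|\bpi^t(\be_K)\|_{\partial K}^2+\|\bpi^t(\bh_K)\|_{\partial K}^2=0$, the purely imaginary mass term $\im\kappa(\|\be_K\|_K^2-\|\bh_K\|_K^2)$ dropping out. Where you genuinely diverge is the endgame. Once the tangential traces vanish, the paper asserts that $(\be_K,\bh_K)$ solves the strong Maxwell system in $K$ with homogeneous boundary conditions and invokes ``the stability of the Maxwell system,'' an appeal to a continuous-level uniqueness result left unspecified (with both traces zero one has overdetermined Cauchy data; uniqueness can for instance be routed through the homogeneous impedance condition $\bpi^t(\be_K)-\ntimes{\bh_K}=\bzero$, valid for every $\kappa>0$). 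You instead remain entirely in the polynomial setting: after passing to the strong relations $\im\kappa\be_K=\curl\bh_K$ and $\im\kappa\bh_K=-\curl\be_K$ (legitimate, as you note, because the residuals lie in $\BCP_p(K)$), you derive $\Delta\bh_K+\kappa^2\bh_K=\bzero$ and kill $\bh_K$ by degree comparison, since $\Delta$ strictly lowers polynomial degree while multiplication by $-\kappa^2$ preserves it. This finish is more elementary and self-contained, avoids any unstated PDE stability input, and is in fact the same style of argument the paper itself uses later in Lemma~\ref{lem:scatter_contract}(ii), where it asserts without proof that ``there cannot be a nontrivial polynomial solution to the time-harmonic Maxwell equations''; your degree argument supplies exactly the justification that claim needs.
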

\begin{proof}
  It is sufficient to show that
  $\bg^\ominus_{K,F}=\bzero$ implies $\be_K=\bh_K=\bzero$.
  For brevity, we drop the subscripts $K$ and $F$.
  Testing system \eqref{eq:chdg:localSys} with $\bv=\be$ and $\bw=\bh$,
  and avoiding the source terms yields
  \begin{displaymath}
    \begin{aligned}
      \im\kappa\prodV{\be}{\be}_K
      - \prodV{\bh}{\curl\be}_K
      + \prodS{\tfrac{1}{2}(\bpi^t\be - \bn\times\bh)}{\bpi^t\be}_{\partial K}
       & = 0, \\
      \im\kappa\prodV{\bh}{\bh}_{K}
      + \prodV{\be}{\curl\bh}_{K}
      + \prodS{\tfrac{1}{2}\bn\times(\bpi^t\be - \bn\times\bh)}{\bpi^t\bh}_{\partial K}
       & = 0,
    \end{aligned}
  \end{displaymath}
  which can be rewritten as
  \begin{displaymath}
    \begin{aligned}
      \im\kappa\prodV{\be}{\be}_K
      - \prodV{\bh}{\curl\be}_K
      - \tfrac{1}{2} \prodS{\bn\times\bh}{\bpi^t\be}_{\partial K}
      + \tfrac{1}{2} \prodS{\bpi^t\be}{\bpi^t\be}_{\partial K}
       & = 0, \\
      \im\kappa\prodV{\bh}{\bh}_{K}
      + \prodV{\be}{\curl\bh}_{K}
      + \tfrac{1}{2} \prodS{\bn\times\be}{\bpi^t\bh}_{\partial K}
      + \tfrac{1}{2} \prodS{\bpi^t\bh}{\bpi^t\bh}_{\partial K}
       & = 0,
    \end{aligned}
  \end{displaymath}
  Taking the conjugate of both equations, integrating by part and using surface identities gives
  \begin{displaymath}
    \begin{aligned}
      - \im\kappa\prodV{\be}{\be}_K
      - \prodV{\be}{\curl\bh}_K
      - \tfrac{1}{2} \prodS{\bn\times\be}{\bpi^t\bh}_{\partial K}
      + \tfrac{1}{2} \prodS{\bpi^t\be}{\bpi^t\be}_{\partial K}
       & = 0, \\
      - \im\kappa\prodV{\bh}{\bh}_{K}
      + \prodV{\bh}{\curl\be}_{K}
      + \tfrac{1}{2} \prodS{\bn\times\bh}{\bpi^t\be}_{\partial K}
      + \tfrac{1}{2} \prodS{\bpi^t\bh}{\bpi^t\bh}_{\partial K}
       & = 0,
    \end{aligned}
  \end{displaymath}
  Adding the four previous equations yields
  \begin{displaymath}
    \|\bpi^t\be\|_{\partial K}^2 +  \|\bpi^t\bh\|_{\partial K}^2 = 0,
  \end{displaymath}
  which gives $\bpi^t\be = \bpi^t\bh = \bzero$ on $\partial K$.
  Therefore, Problem~\ref{pbm:CHDG_local} combined with these boundary conditions implies
  the pair $(\be,\bh)$ is a solution to the strong Maxwell system \eqref{eq:maxwell_system} in $K$ with
  homogeneous boundary conditions. Then, the stability of the Maxwell system implies that the pair
  $(\be,\bh)$ must also be identically zero in $K$, as desired.
\end{proof}

\subsection{Hybridized problem}
\label{sect:analysis:global}

We show that the operator $\opPi\opS$ is a strict contraction.
As a consequence of the Banach fixed-point theorem, the  hybridized problem (Problem~\ref{pbm:trans_operator}) is well-posed, and it can be solved with the fixed-point iteration.
We begin with the following lemma pertaining to the scattering operator $\opS$.
\begin{lemma}
  \label{lem:scatter_contract}
  (i) The solution of the local problem~\ref{pbm:CHDG_local} verifies
  \begin{multline}
    \sum_{F\in\CF_K}\|\bpi^t_{K,F}(\be_K) - \bn_{K,F}\times\bh_K\|_F^2
    \\
    + \sum_{F\in\CF_K}\|\bg_{K,F}^\ominus - (\bpi^t_{K,F}(\be_K) + \bn_{K,F}\times\bh_K) \|_F^2
    = \sum_{F\in\CF_K}\|\bg_{K,F}^\ominus\|_F^2.
    \label{eq:contract_equality}
  \end{multline}
  (ii) Moreover, the second term of the left-hand side of \eqref{eq:contract_equality} vanishes if and
  only if $\bg^\ominus_{K,F}=\bzero$ for all $F\in\CF_K$.
\end{lemma}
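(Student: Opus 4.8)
The plan is to prove both parts by testing the local problem~\ref{pbm:CHDG_local} with the fields themselves and then using the integration-by-parts formula and the surface identities recorded above. Throughout, I abbreviate $\ba := \bpi^t_{K,F}(\be_K)$ and $\bb := \bn_{K,F}\times\bh_K$ on a generic face $F$, so that the outgoing variable is $\bg^\oplus_{K,F} = \ba - \bb$, while the combination appearing in~\eqref{eq:contract_equality} is $\bpi^t_{K,F}(\be_K) + \bn_{K,F}\times\bh_K = \ba + \bb$.

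For part (i), I would take $\bv_K = \be_K$ and $\bw_K = \bh_K$ in~\eqref{eq:chdg:localSys}, add the two equations, and take the real part. The term $\im\kappa(\|\be_K\|_K^2 + \|\bh_K\|_K^2)$ is purely imaginary and drops out, and using $\Re\prodV{\be_K}{\curl\bh_K}_K = \Re\prodV{\curl\bh_K}{\be_K}_K$ together with the integration-by-parts formula the two volume curl terms collapse to the single boundary term $\Re\prodS{\bb}{\ba}_{\partial K}$. Rewriting every remaining boundary contribution in terms of $\ba$ and $\bb$ (using $\prodS{\bn_{K,F}\times\bg^\oplus_{K,F}}{\bpi^t_{K,F}(\bh_K)}_F = -\prodS{\bg^\oplus_{K,F}}{\bn_{K,F}\times\bh_K}_F$, valid because $\bg^\oplus_{K,F}$ is tangential), the left-hand side simplifies to $\tfrac12\sum_F(\|\ba\|_F^2 + \|\bb\|_F^2)$ and the right-hand side to $\tfrac12\sum_F\Re\prodS{\bg^\ominus_{K,F}}{\ba+\bb}_F$, i.e.
\[
  \sum_{F\in\CF_K}\big(\|\ba\|_F^2 + \|\bb\|_F^2\big) = \sum_{F\in\CF_K}\Re\prodS{\bg^\ominus_{K,F}}{\ba+\bb}_F.
\]
Expanding $\|\bg^\ominus_{K,F} - (\ba+\bb)\|_F^2$ and invoking $\|\ba-\bb\|_F^2 + \|\ba+\bb\|_F^2 = 2\|\ba\|_F^2 + 2\|\bb\|_F^2$ with $\bg^\oplus_{K,F}=\ba-\bb$ then rearranges this into exactly~\eqref{eq:contract_equality}.

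For part (ii), the implication $\bg^\ominus_{K,F} = \bzero \Rightarrow$ vanishing is immediate from Theorem~\ref{thm:wellposed}, which gives $\be_K = \bh_K = \bzero$ and hence $\ba+\bb = \bzero$ on each face. For the converse I would assume the second term of~\eqref{eq:contract_equality} vanishes, so that $\bg^\ominus_{K,F} = \ba + \bb$ on every face. Substituting this into the right-hand side of~\eqref{eq:chdg:localSys} and using $\tfrac12(\bg^\oplus_{K,F}-\bg^\ominus_{K,F}) = -\bn_{K,F}\times\bh_K$ and $\tfrac12\bn_{K,F}\times(\bg^\oplus_{K,F}+\bg^\ominus_{K,F}) = \bn_{K,F}\times\be_K$, the boundary terms cancel entirely after one integration by parts, reducing the system to $\im\kappa\prodV{\be_K}{\bv_K}_K = \prodV{\curl\bh_K}{\bv_K}_K$ and $\im\kappa\prodV{\bh_K}{\bw_K}_K = -\prodV{\curl\be_K}{\bw_K}_K$ for all test functions. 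Since $\im\kappa\be_K - \curl\bh_K$ and $\im\kappa\bh_K + \curl\be_K$ belong to $\BCP_p(K)$, testing against themselves yields the strong relations $\im\kappa\be_K = \curl\bh_K$ and $\im\kappa\bh_K = -\curl\be_K$ in $K$.

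The decisive step, and the one I expect to be most delicate, is to conclude $\be_K = \bh_K = \bzero$ from these strong relations. Eliminating $\bh_K$ gives $\curl\curl\be_K = \kappa^2\be_K$, and here the polynomial structure is essential: decomposing $\be_K$ into homogeneous parts, $\curl\curl$ lowers the degree by two whereas $\kappa^2\be_K$ preserves it, so a descending degree-by-degree comparison forces every homogeneous component to vanish (using $\kappa > 0$), whence $\be_K = \bzero$ and then $\bh_K = -\curl\be_K/(\im\kappa) = \bzero$. This gives $\bg^\ominus_{K,F} = \ba + \bb = \bzero$, as claimed. Besides this final algebraic argument, the main care required is the sign and conjugation bookkeeping in part (i), and the observation in part (ii) that after substitution no boundary condition survives — so that triviality must be recovered from the polynomial nature of the discrete fields rather than from a PDE stability estimate, which could in principle fail when $\kappa^2$ is a Maxwell eigenvalue of $K$.
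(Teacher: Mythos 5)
Your proof is correct and follows essentially the same route as the paper: test the local problem with $(\be_K,\bh_K)$, use integration by parts and the surface identities to isolate the boundary terms (your ``take the real part'' is identical in substance to the paper's adding the two equations to their conjugates), and rearrange via the parallelogram-type identity to get \eqref{eq:contract_equality}; for (ii), substitute $\bg^\ominus_{K,F} = \bpi^t_{K,F}(\be_K) + \bn_{K,F}\times\bh_K$, integrate by parts to recover the strong relations, and conclude from the nonexistence of nontrivial polynomial solutions. The only difference is that you explicitly justify that nonexistence via the degree-descent argument on $\curl\curl\be_K = \kappa^2\be_K$ (and correctly observe that no PDE stability estimate is available since no boundary condition survives), a detail the paper merely asserts.
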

\begin{proof}
  We suppress the subscripts $K,F$ for simplicity.

  \emph{(i)} Taking the test functions
  $\bv = \be$ and $\bw = \bh$, Problem~\ref{pbm:CHDG_local} reduces to
  \begin{displaymath}
    \begin{aligned}
      \im\kappa\prodV{\be}{\be}_K
      - \prodV{\bh}{\curl\be}_K
      - \tfrac{1}{2} \prodS{\bn\times\bh}{\bpi^t\be}_{\partial K}
      + \tfrac{1}{2} \prodS{\bpi^t\be}{\bpi^t\be}_{\partial K}
       & = \tfrac{1}{2} \prodS{\bg^{\ominus}}{\bpi^t\be}_{\partial K}, \!\!\!
      \\
      \!\!\!
      \im\kappa\prodV{\bh}{\bh}_{K}
      + \prodV{\be}{\curl\bh}_{K}
      + \tfrac{1}{2} \prodS{\bn\times\be}{\bpi^t\bh}_{\partial K}
      + \tfrac{1}{2} \prodS{\bpi^t\bh}{\bpi^t\bh}_{\partial K}
       & = \tfrac{1}{2} \prodS{\bg^{\ominus}}{\bn\times\bh}_{\partial K}.
    \end{aligned}
  \end{displaymath}
  Taking the conjugate of both equations, integrating by part and using surface identities gives
  \begin{displaymath}
    \begin{aligned}
      - \im\kappa\prodV{\be}{\be}_K
      - \prodV{\be}{\curl\bh}_K
      - \tfrac{1}{2} \prodS{\bn\times\be}{\bpi^t\bh}_{\partial K}
      + \tfrac{1}{2} \prodS{\bpi^t\be}{\bpi^t\be}_{\partial K}
       & = \tfrac{1}{2} \prodS{\bpi^t\be}{\bg^{\ominus}}_{\partial K}, \!\!\!
      \\
      \!\!\!
      - \im\kappa\prodV{\bh}{\bh}_{K}
      + \prodV{\bh}{\curl\be}_{K}
      + \tfrac{1}{2} \prodS{\bn\times\bh}{\bpi^t\be}_{\partial K}
      + \tfrac{1}{2} \prodS{\bpi^t\bh}{\bpi^t\bh}_{\partial K}
       & = \tfrac{1}{2} \prodS{\bn\times\bh}{\bg^{\ominus}}_{\partial K}.
    \end{aligned}
  \end{displaymath}
  Adding the four previous equations yields
  \begin{displaymath}
    \prodS{\bpi^t\be}{\bpi^t\be}_{\partial K}
    +  \prodS{\bpi^t\bh}{\bpi^t\bh}_{\partial K}
    = \tfrac{1}{2} \prodS{\bg^{\ominus}}{ \bpi^t\be + \bn\times\bh}_{\partial K}
    + \tfrac{1}{2} \prodS{ \bpi^t\be + \bn\times\bh}{\bg^{\ominus}}_{\partial K},
  \end{displaymath}
  and then
  \begin{displaymath}
    2 \|\bpi^t\be\|^2_{\partial K} + 2 \| \bn\times\bh\|^2_{\partial K}
    = \prodS{\bg^{\ominus}}{\bpi^t\be + \bn\times\bh}_{\partial K}
    +\prodS{\bpi^t\be + \bn\times\bh}{\bg^{\ominus}}_{\partial K}.
  \end{displaymath}
  The left-hand side of this equation is equal to
  \begin{displaymath}
    \|\bpi^t\be +  \bn\times\bh\|^2_{\partial K}
    + \|\bpi^t\be -  \bn\times\bh\|^2_{\partial K},
  \end{displaymath}
  and the right-hand side is equal to
  \begin{displaymath}
    \|\bg^{\ominus}\|^2_{\partial K}
    + \|\bpi^t\be + \bn\times\bh\|^2_{\partial K}
    - \|\bg^{\ominus} - (\bpi^t\be + \bn\times\bh)\|^2_{\partial K}.
  \end{displaymath}
  We then have
  \begin{displaymath}
    \|\bpi^t\be - \bn\times\bh\|^2_{\partial K}
    = \|\bg^{\ominus}\|^2_{\partial K}
    - \|\bg^{\ominus} - (\bpi^t\be + \bn\times\bh)\|^2_{\partial K},
  \end{displaymath}
  which yields \eqref{eq:contract_equality}.

  \emph{(ii)} If the second term of \eqref{eq:contract_equality} is zero, we have that
  \begin{displaymath}
    \bg^\ominus = \bpi^t\be + \ntimes{\bh}
    \quad\text{on }\partial K.
  \end{displaymath}
  Applying this in
  Problem~\ref{pbm:CHDG_local} yields that the pair $(\be, \bh)$ solves the following system
  \begin{displaymath}
    \begin{aligned}
      \im\kappa\prodV{\be}{\bv}_K
      - \prodV{\bh}{\curl \bv}_K
      -\prodS{\ntimes{\bh}}{\bpi^t(\bv)}_{\partial K}
       & = 0,
      \\
      \im\kappa\prodV{\bh}{\bw}_{K}
      + \prodV{\be}{\curl \bw}_{K}
      + \prodS{\ntimes{\be}}{\bpi^t(\bw)}_{\partial K}
       & = 0,
    \end{aligned}
  \end{displaymath}
  for all $\bv,\bw\in\BCP_p(K)$.
  By using integration by parts in both equations, we see that the pair $(\be, \bh)$
  satisfies the problem in the strong form. However, there cannot be a nontrivial
  polynomial solution to the time-harmonic Maxwell equations.
  Hence, $\be = \bh = \bzero$, and then $\bg^\ominus = \bzero$.
  The converse follows from the well-posedness of the problem (Theorem~\ref{thm:wellposed}).
\end{proof}
We are now in a position to prove the following.
\begin{theorem}
  The scattering operator $\opS$ is a strict contraction, i.e.
  \begin{displaymath}
    \|\opS(\bg_h^\ominus)\|_{\partial\CT_h} < \|\bg_h^\ominus\|_{\partial\CT_h},\quad\forall\bg_h^\ominus\in\BG_h^t\setminus\{\bzero\}.
  \end{displaymath}
\end{theorem}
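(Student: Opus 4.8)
The plan is to obtain the contraction directly from the element-wise energy identity already established in Lemma~\ref{lem:scatter_contract}, by summing it over the mesh. The crucial observation is that the first term on the left-hand side of \eqref{eq:contract_equality} is exactly the local action of the scattering operator: by the definition \eqref{eq:scatter_operator} we have $\opS(\bg_h^\ominus)|_{K,F} = \bpi^t_{K,F}(\be_K) - \bn_{K,F}\times\bh_K$, where $(\be_K,\bh_K)$ solves the local problem with data $(\bg_{K,F}^\ominus)_{F\in\CF_K}$. Hence $\sum_{F\in\CF_K}\|\bpi^t_{K,F}(\be_K) - \bn_{K,F}\times\bh_K\|_F^2 = \sum_{F\in\CF_K}\|\opS(\bg_h^\ominus)|_{K,F}\|_F^2$, so the identity \eqref{eq:contract_equality} already has the desired quantity on the left.

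First I would sum \eqref{eq:contract_equality} over all $K\in\CT_h$ and use the definition of the skeleton norm $\|\cdot\|_{\partial\CT_h}^2 = \sum_{K\in\CT_h}\sum_{F\in\CF_K}\|\cdot\|_F^2$ to obtain
\[
  \|\opS(\bg_h^\ominus)\|_{\partial\CT_h}^2 + R(\bg_h^\ominus) = \|\bg_h^\ominus\|_{\partial\CT_h}^2,
\]
where $R(\bg_h^\ominus) := \sum_{K\in\CT_h}\sum_{F\in\CF_K}\|\bg_{K,F}^\ominus - (\bpi^t_{K,F}(\be_K) + \bn_{K,F}\times\bh_K)\|_F^2 \geq 0$ collects the (nonnegative) second terms. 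This immediately yields the non-expansive bound $\|\opS(\bg_h^\ominus)\|_{\partial\CT_h} \leq \|\bg_h^\ominus\|_{\partial\CT_h}$.

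To upgrade this to the strict inequality, I would show that $R(\bg_h^\ominus) > 0$ whenever $\bg_h^\ominus \neq \bzero$. Indeed, if $\bg_h^\ominus \neq \bzero$ then there is at least one element $K_0$ with $\bg_{K_0,F}^\ominus \neq \bzero$ for some $F\in\CF_{K_0}$. By part (ii) of Lemma~\ref{lem:scatter_contract}, the element-wise remainder $\sum_{F\in\CF_{K_0}}\|\bg_{K_0,F}^\ominus - (\bpi^t_{K_0,F}(\be_{K_0}) + \bn_{K_0,F}\times\bh_{K_0})\|_F^2$ is then strictly positive, so that $R(\bg_h^\ominus) > 0$. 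Combined with the summed identity this gives $\|\opS(\bg_h^\ominus)\|_{\partial\CT_h}^2 = \|\bg_h^\ominus\|_{\partial\CT_h}^2 - R(\bg_h^\ominus) < \|\bg_h^\ominus\|_{\partial\CT_h}^2$, as claimed.

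I do not expect a genuine obstacle: once Lemma~\ref{lem:scatter_contract} is available the argument is essentially bookkeeping. The only point requiring care is the logical structure of the strict step, namely that the \emph{global} remainder $R$ vanishes only when \emph{every} element-wise remainder vanishes, which by part (ii) forces $\bg_h^\ominus$ to be identically zero on the whole skeleton. Finally, since the local problem depends linearly on its data (by the well-posedness of Theorem~\ref{thm:wellposed}), $\opS$ is linear and $\BG_h^t$ is finite-dimensional; a compactness argument on the unit sphere then upgrades this pointwise strict contraction to a uniform contraction constant strictly below one, which is what is needed for the Banach fixed-point argument behind Problem~\ref{pbm:trans_operator}.
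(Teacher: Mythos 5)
Your proof is correct and follows essentially the same route as the paper: the paper likewise invokes Lemma~\ref{lem:scatter_contract} element by element, identifies the first term of \eqref{eq:contract_equality} with $\opS$ via \eqref{eq:scatter_operator}, and sums over $K\in\CT_h$, using part~(ii) for strictness exactly as you do. Your closing remark upgrading the pointwise strict inequality to a uniform contraction constant via compactness of the unit sphere in the finite-dimensional space $\BG_h^t$ is a correct addition that the paper leaves implicit, but it is not part of the stated theorem.
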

\begin{proof}
  Let $\bg_h^\ominus\in\BG_h^t\setminus\{\bzero\}$ be arbitrary. We set $\bg_{K,F}^\ominus :=
    \bg_h^\ominus|_{K,F}$ and $\be_K,\bh_K$ as the corresponding solutions to
  Problem~\ref{pbm:CHDG_local}. 
  By Lemma~\ref{lem:scatter_contract}, we have
  that
  \begin{equation}
    \sum_{F\in\CF_K}\|\bpi^t_{K,F}(\be_K) - \bn_{K,F}\times\bh_K\|_F^2 \le
    \sum_{F\in\CF_K}\|\bg_{K,F}^\ominus\|_F^2,
    \label{eq:ineq_element}
  \end{equation}
  with equality occurring only if $\bg^\ominus_{K,F} = \bzero$ for all $F\in\CF_K$.
  Next, by definition of the scattering operator \eqref{eq:scatter_operator},
  \eqref{eq:ineq_element} is equivalent to
  \begin{displaymath}
    \sum_{F\in\CF_K} \|\opS(\bg_{K,F}^\ominus)\|_F^2 \le \sum_{F\in\CF_K}\|\bg_{K,F}^\ominus\|_F^2,
  \end{displaymath}
  again with equality occurring only if $\bg^\ominus_{K,F} = \bzero$ for all $F\in\CF_K$. Since 
  $\bg_h^\ominus\ne \bzero$, summing this last estimate over all $K\in\CT_h$ yields the result.
\end{proof}
Now, we consider the contraction property for the exchange operator $\opPi$.
\begin{theorem} The exchange operator $\opPi$ is a
  contraction, i.e.
  \begin{displaymath}
    \|\opPi(\bg_h^\ominus)\|_{\partial\CT_h} \le \|\bg_h^\ominus\|_{\partial\CT_h},\quad\forall\bg_h^\ominus\in\BG_h^t.
  \end{displaymath}
  In addition, if $\Gamma\rmsub{I}=\emptyset$, $\opPi$ is an involution and an isometry.
\end{theorem}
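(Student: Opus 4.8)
The plan is to evaluate $\|\opPi(\bg_h^\ominus)\|_{\partial\CT_h}^2$ directly from the definition \eqref{eq:exchange_operator} and compare it termwise with $\|\bg_h^\ominus\|_{\partial\CT_h}^2$. Writing $\bg_{K,F} := \bg_h^\ominus|_{K,F}$, recall that
\begin{displaymath}
  \|\bg_h^\ominus\|_{\partial\CT_h}^2 = \sum_{K\in\CT_h}\sum_{F\in\CF_K}\|\bg_{K,F}\|_F^2,
\end{displaymath}
so the sum runs over all element-face pairs. I would partition this sum according to the four cases in \eqref{eq:exchange_operator}: interior faces, and boundary faces lying on $\Gamma\rmsub{E}$, $\Gamma\rmsub{H}$, and $\Gamma\rmsub{I}$.

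The heart of the argument is the bookkeeping on interior faces. Each interior face $F$ shared by $K$ and $K'$ contributes two terms to the sum, namely $\|\bg_{K,F}\|_F^2$ and $\|\bg_{K',F}\|_F^2$. Since $\opPi$ merely swaps these two values ($\opPi(\bg_h^\ominus)|_{K,F}=\bg_{K',F}$ and $\opPi(\bg_h^\ominus)|_{K',F}=\bg_{K,F}$), their combined contribution is unchanged. On $\Gamma\rmsub{E}$ the action is a sign flip and on $\Gamma\rmsub{H}$ the identity, so in both cases $\|\opPi(\bg_h^\ominus)|_{K,F}\|_F^2 = \|\bg_{K,F}\|_F^2$ pointwise. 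Only on $\Gamma\rmsub{I}$ does the operator send the value to $\bzero$. Collecting these observations I expect to obtain the exact identity
\begin{displaymath}
  \|\opPi(\bg_h^\ominus)\|_{\partial\CT_h}^2 = \|\bg_h^\ominus\|_{\partial\CT_h}^2 - \sum_{F\subset\Gamma\rmsub{I}}\|\bg_{K,F}\|_F^2,
\end{displaymath}
from which the contraction bound $\|\opPi(\bg_h^\ominus)\|_{\partial\CT_h}\le\|\bg_h^\ominus\|_{\partial\CT_h}$ follows immediately, since the subtracted term is nonnegative.

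The two remaining claims are then almost free. If $\Gamma\rmsub{I}=\emptyset$ the deficit term vanishes, so the displayed identity reduces to $\|\opPi(\bg_h^\ominus)\|_{\partial\CT_h}=\|\bg_h^\ominus\|_{\partial\CT_h}$, i.e.~$\opPi$ is an isometry. For the involution property I would apply $\opPi$ twice and check each case: on an interior face $\opPi^2(\bg_h^\ominus)|_{K,F}=\opPi(\bg_h^\ominus)|_{K',F}=\bg_{K,F}$, on $\Gamma\rmsub{E}$ the two sign flips cancel, and on $\Gamma\rmsub{H}$ the identity is applied twice, so $\opPi^2 = \opI$ in all cases. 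The only real subtlety is getting the interior-face bookkeeping right---correctly pairing the two element-face contributions of each interior face and tracking the orientation when composing $\opPi$ with itself---but no analysis beyond elementary $\BL^2$ identities is needed.
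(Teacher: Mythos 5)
Your proof is correct and follows exactly the route the paper intends: the paper dispatches this theorem in one line as a ``straightforward consequence of the definition \eqref{eq:exchange_operator}'', and your case-by-case bookkeeping (paired swap on interior faces, sign flip on $\Gamma\rmsub{E}$, identity on $\Gamma\rmsub{H}$, annihilation on $\Gamma\rmsub{I}$) is precisely the verification being left implicit, including the exact identity with the nonnegative $\Gamma\rmsub{I}$ deficit and the involution check via $\opPi^2=\opI$.
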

\begin{proof}
  These results are straightforward consequences of the definition \eqref{eq:exchange_operator}.
\end{proof}
Putting together the previous two theorems, we arrive at the following.
\begin{corollary}
  \label{cor:strict}
  The operator $\opPi\opS$ is a strict contraction, i.e.
  \begin{displaymath}
    \|\opPi\opS(\bg_h^\ominus)\|_{\partial\CT_h} <
    \|\bg_h^\ominus\|_{\partial\CT_h},\quad\forall\bg_h^\ominus\in\BG_h^t\setminus\{\bzero\}.
  \end{displaymath}
\end{corollary}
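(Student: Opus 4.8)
The plan is to chain the two preceding theorems, composing the strict contraction property of $\opS$ with the non-expansiveness of $\opPi$; no new analysis is required. I would fix an arbitrary $\bg_h^\ominus \in \BG_h^t \setminus \{\bzero\}$. Since $\opS$ maps $\BG_h^t$ into itself, the element $\opS(\bg_h^\ominus)$ lies again in $\BG_h^t$, so the contraction estimate for $\opPi$ applies with $\opS(\bg_h^\ominus)$ as its argument, giving
\[
  \|\opPi\opS(\bg_h^\ominus)\|_{\partial\CT_h} \le \|\opS(\bg_h^\ominus)\|_{\partial\CT_h}.
\]

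Next I would invoke the strict contraction of $\opS$, which is available precisely because $\bg_h^\ominus \ne \bzero$, to bound the right-hand side strictly by $\|\bg_h^\ominus\|_{\partial\CT_h}$. Combining the two estimates yields $\|\opPi\opS(\bg_h^\ominus)\|_{\partial\CT_h} < \|\bg_h^\ominus\|_{\partial\CT_h}$; as $\bg_h^\ominus$ was arbitrary, this is the claim.

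The step that deserves the only bit of care is the logical ordering of the two inequalities rather than any computation: the strictness must originate from $\opS$, whose strict decrease ultimately traces back to part (ii) of Lemma~\ref{lem:scatter_contract}, and not from $\opPi$, which is merely non-expansive—indeed an isometry when $\Gamma\rmsub{I} = \emptyset$ and therefore incapable of contributing any strict decrease. Applying the non-strict bound for $\opPi$ first and the strict bound for $\opS$ second is exactly what propagates the strict inequality through the composition, so I expect no genuine obstacle.
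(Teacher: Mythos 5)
Your proposal is correct and matches the paper exactly: the paper proves the corollary by ``putting together the previous two theorems,'' i.e.\ chaining the non-expansiveness of $\opPi$ (applied to $\opS(\bg_h^\ominus)\in\BG_h^t$) with the strict contraction of $\opS$, precisely as you do. Your remark that the strictness must come from $\opS$ rather than from $\opPi$ is a correct and welcome clarification of the same argument.
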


\section{Discrete systems and iterative solvers}
\label{sect:disc_local}

The solution procedure consists in applying an iterative method to the hybridized problem
\ref{pbm:trans_weak}, which involves solving the local problem \ref{pbm:CHDG_local} on all elements
for a given incoming transmission variable. In this section, we derive the discrete systems and
describe the corresponding iterative procedures. Throughout this section, for vector-valued
functions, we use the non-bold counterpart of a symbol to denote a single component (e.g.,
$n_{K,F,d}$ for the $d\rmsup{th}$ component of $\bn_{K,F}$), and the notation $[\cdot]_i$ refers to
the $i\rmsup{th}$ entry of an arbitrary vector.

\subsection{Representation of the numerical fields}

The numerical fields are represented with Lagrange basis functions of maximum degree $p$. The
discrete unknowns correspond to the values of the Cartesian components of the fields at finite
element nodes, which simplifies the implementation, see
e.g.~\cite{hesthaven2002nodal,hesthaven2007nodal}. The spatial distribution of the nodes in the
reference tetrahedron is defined using a technique described in \cite{warburton2006explicit}, and
the nodes in the reference triangle are chosen to match the face nodes of the tetrahedron. The
number of nodes per tetrahedron and per triangle is given by $N\rmsub{p}=(p+1)(p+2)(p+3)/6$ and
$N\rmsub{fp}=(p+1)(p+2)/2$, respectively.

Denoting the sets of Lagrange basis functions by $\{\ell_{K,i}(\bx)\}_{i=1\dots N\rmsub{p}}$ on each
element $K$ and $\{\ell_{K,F,i}(\bx)\}_{i=1\dots N\rmsub{fp}}$ on each face $F\in\CF_K$, the
$d\rmsup{th}$ components of the vector-valued polynomials $\be_K, \bh_K, \bg_{K,F}$ are written as
\begin{subequations}
\begin{align}
  e_{K,d}(\bx)
  &= \sum_{i= 1}^{N\rmsub{p}} [\me_{K,d}]_i\:\ell_{K,i}(\bx),
  \quad \text{on each $K$},
  \\
  h_{K,d}(\bx)
  &= \sum_{i= 1}^{N\rmsub{p}} [\mh_{K,d}]_i\:\ell_{K,i}(\bx),
  \quad
  \text{on each $K$},\\
  g^{\ominus}_{K,F,d}(\bx)
  &= \sum_{i= 1}^{N\rmsub{fp}} [\mg^{\ominus}_{K,F,d}]_i\:\ell_{K,F,i}(\bx),
  \quad
  \text{on each $F\in\CF_K$},
  \label{eqn:rep2}
\end{align}
\label{eq:rep}
\end{subequations}
for $d=1,2,3$, where $\me_{K,d}$, $\mh_{K,d}\in\mathbb{C}^{N\rmsub{p}}$ and
$\mg^{\ominus}_{K,F,d}\in\mathbb{C}^{N\rmsub{fp}}$
are the corresponding unknown vectors of DOFs.

Since the transmission variables belong to the tangential trace space \eqref{eq:tantrace_defn} on each face, they can be represented with only two components in local frames associated with the faces.
However, the three Cartesian components are stored to simplify the implementation. Indeed,
storing only the tangential components would correspond to a 33\% reduction in DOFs ($2 \times
N\rmsub{fp}$ instead of $3 \times N\rmsub{fp}$ per face) but would necessitate additional operations and
storage to handle the transformation between the tangent space and the full space.

\subsection{Local systems}

To derive the discrete version of the local problem \ref{pbm:CHDG_local}, we first write it in
Cartesian components. First, for a given a vector-valued function $\bv:\Omega\to\mathbb{C}^3$, we recall the
formulas for the $d\rmsup{th}$ component
of the various operators involved:
\begin{displaymath}
    [\curl\bv]_d
      = \sum_{e,f} \varepsilon_{def} \partial_{x_e} v_f,
      \quad
    [\bn\times\bv]_d
      = \sum_{e,f} \varepsilon_{def} n_e v_f,
    \quad
    [\bpi^t(\bv)]_d
      = v_d - n_d \sum_{e} n_e v_e,
\end{displaymath}
for $d=1,2,3$, where $\varepsilon_{def}$ is the Levi-Civita symbol.
Simple calculations then yield the following problem.
\begin{problem}
\label{eq:localPbmCart}
Find
$([e_{K,1},e_{K,2},e_{K,3}],[h_{K,1},h_{K,2},h_{K,3}]) \in [\CP_p(K)]^3\times[\CP_p(K)]^3$
such that, for all
$([v_{K,1},v_{K,2},v_{K,3}],[w_{K,1},w_{K,2},w_{K,3}]) \in [\CP_p(K)]^3\times[\CP_p(K)]^3$,
\begin{displaymath}
  \left\{
  \begin{aligned}
     & \im\kappa\prodV{e_{K,d}}{v_{K,d}}_K
    - \sum_{e,f} \varepsilon_{def} \prodV{h_{K,d}}{\partial_{x_e} v_{K,f}}_K
    + \sum_{F\in\CF_K}\prodS{\tfrac{1}{2}\big[\bg^{\oplus}_{K,F}\big]_d}{v_{K,d}}_{F}
    \\
     & \qquad = \sum_{F\in\CF_K}\prodS{\tfrac{1}{2}g^{\ominus}_{K,F,d}}{v_{K,d}}_{F},
    \\
     & \im\kappa\prodV{h_{K,d}}{w_{K,d}}_{K}
    + \sum_{e,f} \varepsilon_{def} \prodV{e_{K,d}}{\partial_{x_e} w_{K,f}}_K
    + \sum_{F\in\CF_K}\prodS{\tfrac{1}{2}\big[\bn_{K,F}\times\bg^{\oplus}_{K,F}\big]_d}{w_{K,d}}_{F}
    \\
     & \qquad = - \sum_{F\in\CF_K}\sum_{e,f} \prodS{\tfrac{1}{2}\varepsilon_{def} n_{K,F,e}\,g^{\ominus}_{K,F,f}}{w_{K,d}}_{F},
  \end{aligned}
  \right.
\end{displaymath}
with
\begin{displaymath}
  \begin{aligned}
    \big[ \bg^{\oplus}_{K,F}\big]_d
     & =
    \Big(e_{K,d} - n_{K,F,d} \sum_{e} n_{K,F,e} e_{K,e}\Big)
    - \sum_{e,f} \varepsilon_{def} n_{K,F,e}\, h_{K,f},
    \\
    \big[\bn_{K,F}\times\bg^{\oplus}_{K,F}\big]_d
     & = \sum_{e,f}\varepsilon_{def} n_{K,F,e}\, e_{K,f}
    +  \Big(h_{K,d} - n_{K,F,d}\,\sum_e
    n_{K,F,e}\,h_{K,e}\Big),
  \end{aligned}
\end{displaymath}
for $d=1,2,3$.
\end{problem}

The discrete system is obtained by using \eqref{eq:rep} in the previous problem and by taking the Lagrange functions as test functions.
We introduce the local mass and stiffness matrices
\begin{displaymath}
  \begin{aligned}
    [\MM_{K}]_{ij}
     & = \prodV{\ell_{K,j}}{\ell_{K,i}}_K
     &
     & \text{for } i,j= 1\dots N\rmsub{p},
    \\
    [\MB_{K,F}]_{ij}
     & = \prodS{\ell_{K,F,j}}{\ell_{K,i}}_{F}
     &
     & \text{for } i= 1\dots N\rmsub{p} \text{ and } j= 1\dots N\rmsub{fp},
    \\
    [\MS_{K,d}]_{ij}
     & = \prodV{\partial_{x_d}\ell_{K,j}}{\ell_{K,i}}_K
     &
     & \text{for } i,j= 1\dots N\rmsub{p} \text{ and } d=1,2,3,
  \end{aligned}
\end{displaymath}
and the $N\rmsub{fp}\times N\rmsub{p}$ restriction matrix $\MR_{K,F}$ that maps the nodes associates to an element $K$ to the nodes associated to a face $F\in\CF_K$.
Taking this notation into account, we arrive at the following version of the
local problem.
\begin{problem}[Nodal local system]
Find $([\me_{K,1},\me_{K,2},\me_{K,3}],[\mh_{K,1},\mh_{K,2},\mh_{K,3}]) \in [\mathbb{C}^{N\rmsub{p}}]^3\times[\mathbb{C}^{N\rmsub{p}}]^3$ such that
\begin{displaymath}
  \left\{
  \begin{aligned}
  \im\kappa\MM_K\me_{K,d}
  + \sum_{e,f}\varepsilon_{def}\MS_{K,e}^\top\mh_{K,f}
  + \sum_{F\in\CF_K}\tfrac{1}{2}\MB_{K,F} \mf_{K,d}^\mathrm{E}
  &= \sum_{F\in\CF_K}\tfrac{1}{2}\MB_{K,F} \ms_{K,d}^\mathrm{E},
  \\
  \im\kappa\MM_K\mh_{K,d}
  - \sum_{e,f}\varepsilon_{def}\MS_{K,e}^\top\me_{K,f}
  + \sum_{F\in\CF_K}\tfrac{1}{2}\MB_{K,F} \mf_{K,d}^\mathrm{H}
  &= -\sum_{F\in\CF_K}\tfrac{1}{2}\MB_{K,F} \ms_{K,d}^\mathrm{H},
  \end{aligned}
  \right.
\end{displaymath}
with
\begin{displaymath}
  \begin{aligned}
    \mf_{K,d}^\mathrm{E}
     & := \MR_{K,F}\Big[ \Big(\me_{K,d} - n_{K,F,d} \sum_{e} n_{K,F,e}\me_{K,e}\Big)
    - \sum_{e,f}\varepsilon_{def}n_{K,F,e}\mh_{K,f} \Big],
    \\
    \mf_{K,d}^\mathrm{H}
     & := \MR_{K,F}\Big[ \sum_{e,f}\varepsilon_{def}n_{K,F,e}\me_{K,f}
    + \Big(\mh_{K,d} - n_{K,F,d} \sum_e n_{K,F,e}\mh_{K,e}\Big) \Big],
    \\
    \ms_{K,d}^\mathrm{E}
     & := \Big(\mg_{K,F,d}^{\ominus} - n_{K,F,d}\sum_{e}n_{K,F,e}\mg_{K,F,e}^\ominus\Big),
    \\
    \ms_{K,d}^\mathrm{H}
     & := \sum_{e,f}\varepsilon_{def}n_{K,F,e}\mg^\ominus_{K,F,f},
  \end{aligned}
\end{displaymath}
for $d=1,2,3$, for given data $\{[\mg^\ominus_{K,F,1},\mg^\ominus_{K,F,2},\mg^\ominus_{K,F,3}]\}_{F\in\mathcal{F}_K}$.
\end{problem}

In practice, both equations can be multiplied by the inverse of the local mass matrix, and the local matrices associated to any element $K$ can be easily computed to those associated with the reference element, see e.g.~\cite{hesthaven2002nodal,hesthaven2007nodal}.

\subsection{Hybridized systems}

A discrete version of the hybridized problem \ref{pbm:trans_weak} is directly obtained by using the representation \eqref{eqn:rep2} and by taking the Lagrange basis functions as test functions.
By recasting all the discrete values of $\bg_h^\ominus$ and $\bb_h$ into the global vectors $\mg$ and $\mb$, respectively, we obtain the system
\begin{equation}
  \widetilde{\MA}\mg := \MM(\MI - \MPi\MS)\mg = \MM\mb,
  \label{eq:linsys_lagrange}
\end{equation}
where $\MS$ and $\MPi$ are the algebraic counterparts of the scattering and exchange operators in the nodal basis, $\MI$ is the identity matrix, $\MM$ is a block diagonal matrix with the mass matrices associated to the faces of all the elements.

In our implementation, we have considered two preconditioned versions of System \eqref{eq:linsys_lagrange}.
For the first one, we precondition the system on the left with the mass matrix.
\begin{problem}[Nodal hybridized system]
  Find $\mg\in\mathbb{C}^{N_\mathrm{DOF}}$ satisfying
  \begin{equation}
    \MA\mg := (\MI - \MPi\MS)\mg = \mb.
  \end{equation}
  \end{problem}
For the second version, we have used a symmetric preconditioning with the mass matrix, which corresponds to using modal basis functions that are orthonormal with respect to the inner product $\langle\cdot,\cdot\rangle_{\partial\CT_h}$ instead of nodal basis functions.
Following this interpretation, we introduce the modal versions of the discrete matrices and vectors denoted by $\hat{\MA}$, $\hat{\MPi}$, $\hat{\MS}$, $\hat{\mg}$ and $\hat{\mb}$.
In the modal basis, the system then reads as follows.
\begin{problem}[Modal hybridized system]
Find $\hat{\mg}\in\mathbb{C}^{N_\mathrm{DOF}}$ satisfying
\begin{equation}
  \hat{\MA}\hat{\mg} := (\MI - \hat{\MPi}\hat{\MS})\hat{\mg} =
  \hat{\mb}.
  \label{eq:linsys_orthog}
\end{equation}
\end{problem}
In practice, this system is rewritten with the nodal versions of the matrices and vectors by using a change of basis.
Let introduce the change of basis matrix $\MV$ that maps the modal components of any vector to the nodal ones.
Each column of $\MV$ contains the values of one modal basis function at the finite element nodes, see e.g.~\cite{hesthaven2007nodal}.
Using the relations $\mg = \MV\hat{\mg}$, $\mb = \MV\hat{\mb}$, $\MPi = \MV\hat{\MPi}\MV^{-1}$ and $\MS = \MV\hat{\MS}\MV^{-1}$ in System \eqref{eq:linsys_orthog} gives
\begin{displaymath}
  (\MV^{-1}\MA\MV)(\MV^{-1}\mg) = (\MV^{-1}\mb).
\end{displaymath}
Because the mass matrix verifies $\MM = \MV^{-\top}\MV^{-1}$, we also have
\begin{displaymath}
  (\MV^\top\widetilde{\MA}\MV)(\MV^{-1}\mg) = \MV^\top(\MM\mb),
\end{displaymath}
which shows the link with the preconditioning technique.

In the next section, we shall use the $L^2$-norm of fields of $\BG_h^t$, which can be easily computed in both nodal and modal bases.
For any $\bg_h\in\BG_h^t$, we indeed have
\begin{displaymath}
  \|\bg_h\|_{\partial\CT_h}
  = \sqrt{\prodS{\bg_h}{\bg_h}_{\partial\CT_h}}
  = \|\hat{\mg}\|_2
  = \|\mg\|_{\MM},
\end{displaymath}
with the algebraic norms $\|\hat{\mg}\|_2 := \sqrt{\hat{\mg}\:\!^\top\hat{\mg}}$ and $\|\mg\|_{\MM} := \sqrt{\mg^\top\MM\mg}$.
Because the modal basis is orthonormal with respect to $\langle\cdot,\cdot\rangle_{\partial\CT_h}$, the modal mass matrix is the identity.

\subsection{Iterative linear solvers}
\label{sect:linsolve}

Following \cite{modave2023hybridizable}, we consider iterative solvers based on the fixed-point, CGNR and GMRES iterations.

\paragraph{Fixed-point iteration}
The exchange and scattering matrices inherit from the properties of the corresponding operators, which are proved in \S\ref{sect:analysis:global}.
By Corollary~\ref{cor:strict}, the matrices $\hat{\MPi}\hat{\MS}$ and $\MPi\MS$ are strict contractions in the norms $\left\|\cdot\right\|_2$ and $\left\|\cdot\right\|_{\MM}$, respectively.
In both cases, the spectral radius of the matrix is strictly smaller than $1$, and the system can be solved with the fixed-point iteration.
Writing the resulting algorithm with the nodal matrices and vectors, we obtain, for an initial guess $\mg^{(0)}$,
\begin{displaymath}
  \mg^{(\ell + 1)} = \MPi\MS\bg^{(\ell)}
  + \mb,\quad\ell = 0,1,\dots
\end{displaymath}
This iteration is completely equivalent to the corresponding modal version which can be seen
via the change of basis:
\begin{displaymath}
  \underbrace{\MV^{-1}\mg^{(\ell + 1)}}_{\hat{\mg}^{(\ell + 1)}} =
  \underbrace{\MV^{-1}\MPi\MS\MV}_{\hat{\MPi}\hat{\MS}}
  \underbrace{(\MV^{-1}\bg^{(\ell)})}_{\hat{\mg}^{(\ell)}}
  + \underbrace{\MV^{-1}\mb}_{\hat{\mb}},\quad\ell = 0,1,\dots.
\end{displaymath}

\paragraph{CGNR iterations}

The conjugate gradient normal method (CGNR) is applied to the nodal system $\MA\mg = \mb$, which is equivalent to applying the conjugate gradient algorithm to the normal equation $\MA^*\MA\mg = \MA^*\mb$.
The CGNR iteration is also applied to the modal system, where the normal equation $\hat{\MA}^*\hat{\MA}\hat{\mg} = \hat{\MA}\:\!^*\hat{\mb}$ is rewritten with the nodal matrices and vectors.
These approaches are referred as the \textit{nodal CGNR} and \textit{modal CGNR} solvers, respectively.

These two solvers exhibit qualitatively different numerical properties.
With the nodal solver, the iterate $\mg^{(\ell)}$ minimizes $\mg\mapsto\|\MA\mg - \mb\|_2$ in the affine subspace $\mg^{(0)} + \CK_\ell(\MA^*\MA,\MA^*\mr^{(0)})$ with the initial residual $\mr^{(0)} := \MA\mg^{(0)} - \mb$ for an initial guess $\mg^{(0)}$, see e.g.~\cite{Saad2003}.
With the modal solver, the iterate $\mg^{(\ell)}$ minimizes
$\mg\mapsto\|\MA\mg - \mb\|_{\MM}$
in $\mg^{(0)} + \CK_\ell(\MM^{-1}\MA^*\MM\MA,\MM^{-1}\MA^*\MM\mr^{(0)})$.

The modal CGNR solver is given in Algorithm~\ref{alg:CGNRmodal}.
Compared to the nodal one, it requires one application of the mass matrix and its inverse at each iteration, as well as the use of the $\MM$-norm instead of the 2-norm.
These operations are cheap and easy to parallelize because $\MM$ is block diagonal with small blocks of size $N\rmsub{fp}\times N\rmsub{fp}$.

\begin{algorithm}[!t]
  \caption{Modal CGNR iterative solver}
  \label{alg:CGNRmodal}
  \begin{algorithmic}
    \STATE{\text{Initial guess:} $\mg^{(0)} \in \mathbb{R}^n$}
    \STATE{\text{Initial residual:} $\mr^{(0)} = \mb-\MA\mg^{(0)}$}
    \STATE{$\mz^{(0)} = \MM^{-1}\MA^*\MM\mr^{(0)}$}
    \STATE{$\mmp^{(0)} = \mz^{(0)}$}
    \FOR{$\ell = 0,1,\dots$ until convergence}
    \STATE{$\mq^{(\ell)} = \MA\mmp^{(\ell)}$}
    \STATE{$\alpha^{(\ell)} = \|\mz^{(\ell)}\|_{\MM}^2 / \|\mq^{(\ell)}\|_{\MM}^2$}
    \STATE{$\mg^{(\ell+1)} = \mg^{(\ell)} + \alpha^{(\ell)} \mmp^{(\ell)}$}
    \STATE{$\mr^{(\ell+1)} = \mr^{(\ell)} - \alpha^{(\ell)} \mq^{(\ell)}$}
    \STATE{$\mz^{(\ell+1)} = \MM^{-1}\MA^*\MM\mr^{(\ell+1)}$}
    \STATE{$\beta^{(\ell)} = \|\mz^{(\ell+1)}\|_{\MM}^2 / \|\mz^{(\ell)}\|_{\MM}^2$}
    \STATE{$\mmp^{(\ell+1)} = \mz^{(\ell+1)} + \beta^{(\ell)} \mmp^{(\ell)}$}
    \ENDFOR
  \end{algorithmic}
\end{algorithm}

\paragraph{GMRES iterations}

The generalized minimal residual method (GMRES) is applied to both nodal and modal systems, which we
refer to as the \textit{nodal GMRES} and \textit{modal GMRES} solvers, respectively. The iterate
$\mg^{(\ell)}$ minimizes $\mg\mapsto\|\MA\mg - \mb\|_2$ and $\mg\mapsto\|\MA\mg - \mb\|_{\MM}$,
respectively, in the affine subspace $\mg^{(0)} + \CK_\ell(\MA,\mr^{(0)})$. Again, the cost of a
modal iteration is slightly higher than for a nodal iteration because it requires the application of
the mass matrix and its inverse, as well as the use of the $\MM$ norm instead of the 2-norm.
We will also use the standard practice of restarting the GMRES solver and the notation
  GMRES($n$) means that we restart every $n$ iteration. Finally, we explicitly compute the norm of
  the residual $\|\mb - \MA\mg\|_2$ instead of using the norm of the residual that is provided 
  automatically during the Householder version of a GMRES iteration. Indeed, for the modal version,
  this residual is naturally computed in the $\|\cdot\|_{\MM}$-norm. In our experiments, we did not remark
a major difference between the two so we only present the $\|\cdot\|_2$-norm version.

\section{Numerical results}
\label{sect:numerics}

In this section, we present numerical results to validate our implementation, to study and compare the different iterative solvers, and to illustrate the applicability of the approach on a realistic case.
We have implemented the CHDG method with the different iterative linear solver in a dedicated \texttt{C++} code already used for the acoustic case in \cite{modave2024accelerated}.
This code has parallel capabilities with both \texttt{MPI} and \texttt{OpenMP}.
The meshes generation and visualization are performed with \texttt{gmsh} \cite{Geuzaine2009}. In
what follows, the mesh size parameter $h$ is controlled via
\texttt{MeshSizeMin}/\texttt{MeshSizeMax} in \texttt{gmsh}.

\subsection{Validation and comparison}
\label{sect:comparision}

We consider two reference benchmarks corresponding to a free space case (with an analytic solution) and a cavity case (with a quasi-analytic solution) defined the unit cube $\Omega = (0,1)^3 \subset \mathbb{R}^3$.
For both benchmarks, we use a uniform tetrahedral mesh with $h=0.4$ and polynomial
degree $p=4$. The wavenumber is set to $\kappa = 2.1\pi$.

\paragraph{Benchmark 1 (Free space)}
The reference solution of the first case corresponds to the plane-wave fields given by
$\be\rmsub{ref}(\bx) = \be_0 e^{ \im \kappa_r (\bd\cdot\bx)}$
and
$\bh\rmsub{ref}(\bx) = - \bd \times \be\rmsub{ref}(\bx)$
with the direction vector
$\bd := (1,1,1)^\top/\sqrt{3}$
and the amplitude vector $\be_0 = (0,1,-1)^\top/\sqrt{2}$.
On the boundary we set $\partial \Omega := \Gamma\rmsub{I}$, and impose the
exact solution via $\bs\rmsub{I} := -\bn\times(\bn\times\be\rmsub{ref}) +
  \bn\times\bh\rmsub{ref}$.

\begin{figure}[!t]
  \centering
  \begin{subfigure}[b]{0.40\textwidth}
    \centering
    \includegraphics[width=\linewidth]{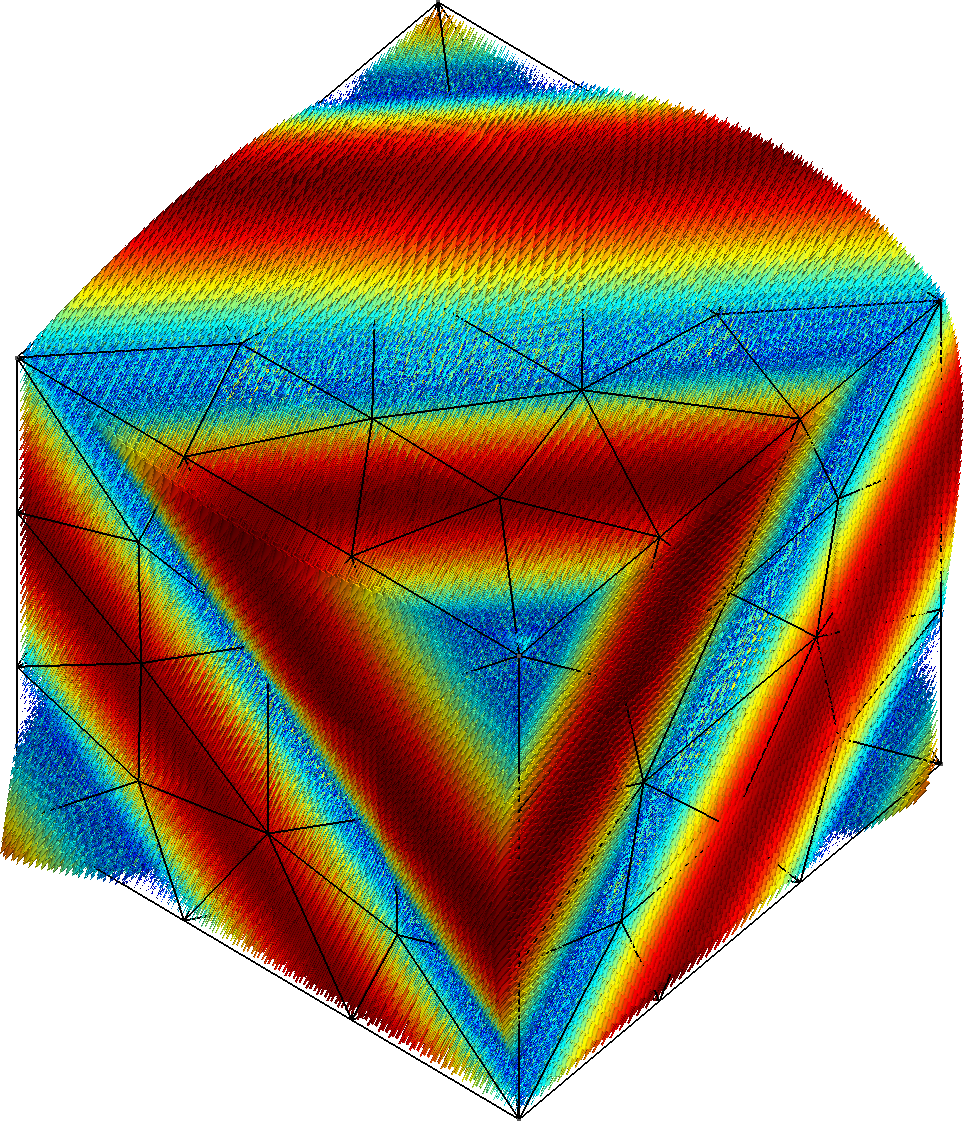}\\[1ex]
    \includegraphics[width=\linewidth]{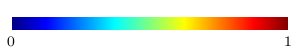}

    \caption{Benchmark~1: free space}
  \end{subfigure}\hfill
  \begin{subfigure}[b]{0.40\textwidth}
    \centering
    \includegraphics[width=\linewidth]{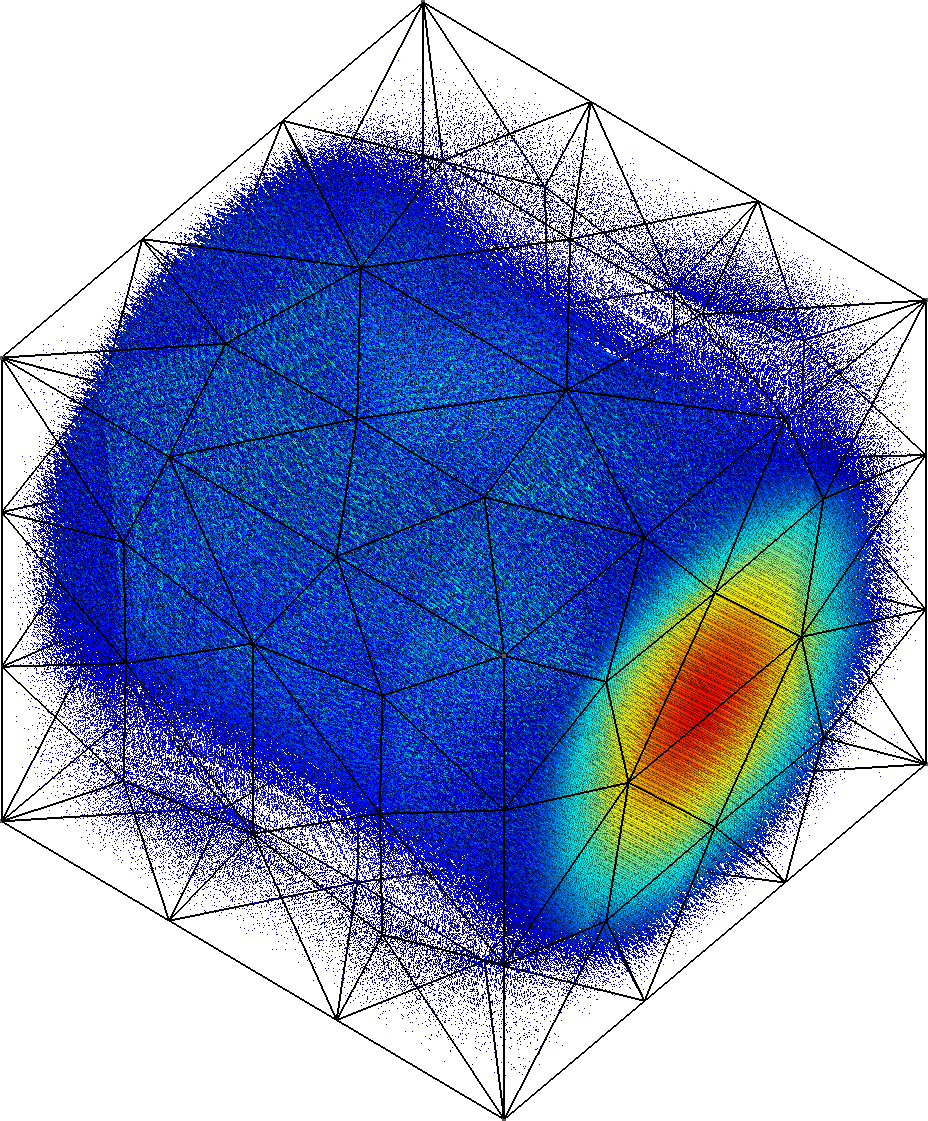}\\[1ex]
    \includegraphics[width=\linewidth]{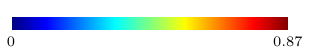}
    \caption{Benchmark~2: cavity}
  \end{subfigure}
  \caption{Plots of $\Re(\be\rmsub{ref})$ for the reference
    solutions. The coloring corresponds to the amplitude of the vector.}
  \label{fig:analytic_profiles}
\end{figure}

\paragraph{Benchmark 2 (Cavity)}
  In the second case, we impose perfect
  electric conductor (PEC) boundary conditions, i.e.  $\partial\Omega :=
    \Gamma\rmsub{E}$ with $\bs\rmsub{E} = \bzero$, and the system is driven by an
  external (volumetric) electric current density $\BJ\rmsub{e} = (-\im/\kappa,
    0, 0)^\top$.
  This problem admits a semi-analytic solution by considering an expansion
  of the eigenmode solutions for the electric field, i.e.
  \[
    \be\rmsub{ref}(\bx) = \sum_{k_1,k_2,k_3\in\mathbb{N}} \alpha_{k_1,k_2,k_3}
    \begin{pmatrix}
      a_1 \cos(k_1 \pi x) \sin(k_2 \pi  y) \sin(k_3 \pi z) \\
      a_2 \sin(k_1 \pi x) \cos(k_2 \pi  y) \sin(k_3 \pi z) \\
      a_3 \sin(k_1 \pi x) \sin(k_2 \pi  y) \cos(k_3 \pi z)
    \end{pmatrix}
  \]
  with \((a_1,a_2,a_3)\cdot(k_1,k_2,k_3)=0\) in order to ensure \(\nabla\cdot \be\rmsub{ref}=0\).
  For the chosen current density $\BJ\rmsub{e}$, the eigenmodes simplify substantially for $k_1 \equiv 0$ and $k_2, k_3$ odd, and the coefficients of the expansion can be calculated explicitly, leading to 
  \[
    \be\rmsub{ref}(\bx) = \sum_{\substack{k_2,k_3\in\mathbb{N}\:\text{odd}}}
    \frac{16}{\pi^2 k_2 k_3(\pi^2 (k_2^2 + k_3^2) - \kappa_r^2)}
    \begin{pmatrix}
      \sin(k_2 y) \sin(k_3 z) \\ 0 \\ 0
    \end{pmatrix}.
  \]
  In practice, we retain up to $k_2,k_3 = 25$ in the series to represent the reference solution.
  \label{bench:cavity}

\paragraph{Relative error and residual}
At each iteration $\ell$, we consider the error of the physical fields for
$\be_h\!^{(\ell)},\bh_h\!^{(\ell)}\in\BV_h$ corresponding to the global vector $\mg^{(\ell)}$
\begin{align*}
  \text{Relative error } := \sqrt{
    \frac
    {\|\be_h\!^{(\ell)} - \be\rmsub{ref}\|^2_{L^2(\Omega)} + \|\bh_h\!^{(\ell)} -  \bh\rmsub{ref}\|^2_{L^2(\Omega)}}
    {\|\be\rmsub{ref}\|^2_{L^2(\Omega)} + \|\bh\rmsub{ref}\|^2_{L^2(\Omega)}}}
\end{align*}
and the relative residual in 2-norm
\begin{align*}
  \text{Relative residual } := 
    \frac{\|\mb - \MA\mg^{(\ell)}\|_2}{\|\mb - \MA\mg^{(0)}\|_2}.
\end{align*}
These quantities are plotted as a function of iterations in Figure~\ref{fig:comparision} for both benchmarks.

\begin{figure}[!tb]
  \centering
  \includegraphics[width=\linewidth]{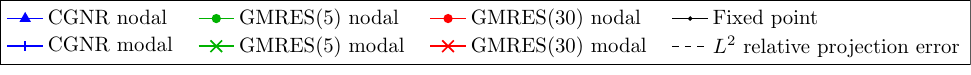}
  \\[0.3cm]
  \begin{subfigure}[b]{0.47\textwidth}
    \centering
    \includegraphics[width=\linewidth]{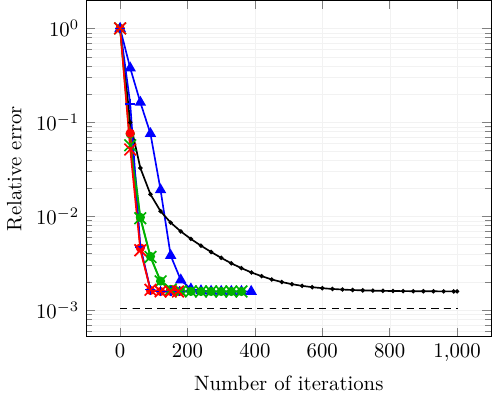}\\[1ex]
    \caption{Relative error history for Benchmark~1}
  \end{subfigure}\hfill
  \begin{subfigure}[b]{0.47\textwidth}
    \centering
    \includegraphics[width=\linewidth]{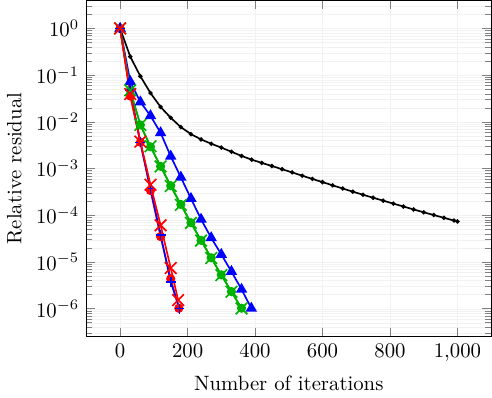}\\[1ex]
    \caption{Relative residual history for Benchmark~1}
  \end{subfigure}
  \\[0.3cm]
  \begin{subfigure}[b]{0.47\textwidth}
    \centering
    \includegraphics[width=\linewidth]{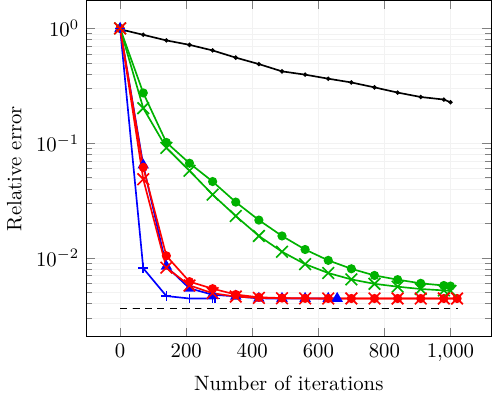}\\[1ex]
    \caption{Relative error history for Benchmark~2}
  \end{subfigure}\hfill
  \begin{subfigure}[b]{0.47\textwidth}
    \centering
    \includegraphics[width=\linewidth]{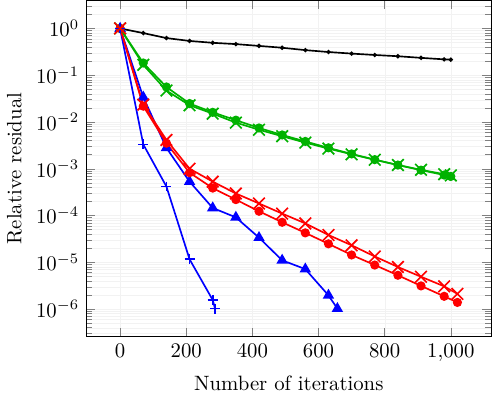}\\[1ex]
    \caption{Relative residual history for Benchmark~2}
  \end{subfigure}
  \caption{Results of the different iterative solvers for the
  Benchmarks~1 (free space) and 2 (cavity).}
  \label{fig:comparision}
\end{figure}

The $L^2$ relative projection error, corresponding to the relative error between $\be\rmsub{ref},\bh\rmsub{ref}$ and their $L^2$ projections on the finite element space, is represented by the horizontal dashed lines.
We observe that none of the methods reduce the error to the level of the projection error.
This is expected since the numerical solution we produce is rigorously equivalent to the DG solution which in turn is not equal to the $L^2$ projection of the true solution.
However, the final error produced the CHDG discretization is about twice of the relative
projection error.

\paragraph{Comparison of the iterative solvers}
We now compare the performance of different iterative solvers.
Several remarks are in order with respect to the results:
\begin{itemize}
\item The fixed-point iteration converges for both test cases as guaranteed by
        Corollary~\ref{cor:strict}. However, we see that this convergence is much
        slower for the cavity benchmark, which can be partially explained by the fact
        that the proximity of resonance modes results in
        worse conditioning at the discrete level.
\item Both versions of the CGNR method outperform the fixed-point iteration on
        both benchmarks. This is more pronounced for the cavity benchmark due to the
        previous remark. Moreover, the modal CGNR as in \eqref{alg:CGNRmodal} is
        more efficient for both benchmarks. As discussed in \S\ref{sect:linsolve},
        we attribute this to the fact the CGNR is minimizing the residual in the
        same norm that we have the contraction.
      \item The sequence of iterates produced by GMRES(30) converges more quickly than
        both the fixed-point and nodal GGNR variant. In the case of the free space benchmark, the
        behavior of GMRES(30) and the modal CGNR variant is almost identical. On the other
        hand, for the cavity benchmark, the modal CGNR variant is more efficient. We also
          consider a shorter restart interval given by GMRES(5) and observe a clear deterioration in
          the speed of convergence for both benchmarks. We tested an non-restarted version of GMRES
          as well which performed practically identically to GMRES(30) for the plane wave benchmark,
          and was faster than CGNR nodal but slower than CGNR modal for the cavity benchmark.
\end{itemize}
These results are consistent with those obtained in the acoustic case in \cite{modave2023hybridizable}.

\subsection{Solver results for the realistic test case}
\label{sect:cobra}

We now present results with a benchmark designed and measured by EADS Aerospatiale Matra Missiles for Workshop EM-JINA 98.
Numerical tests have been performed for the same setup in, e.g. \cite{Liu2003, Dolean2015, Vion2018, Bootland2021}.

\begin{figure}
  \centering
  \includegraphics[width=0.45\textwidth]{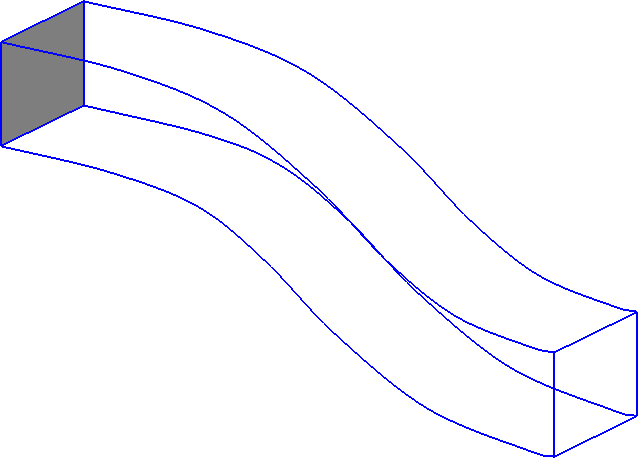}
  \qquad
  \includegraphics[width=0.45\textwidth]{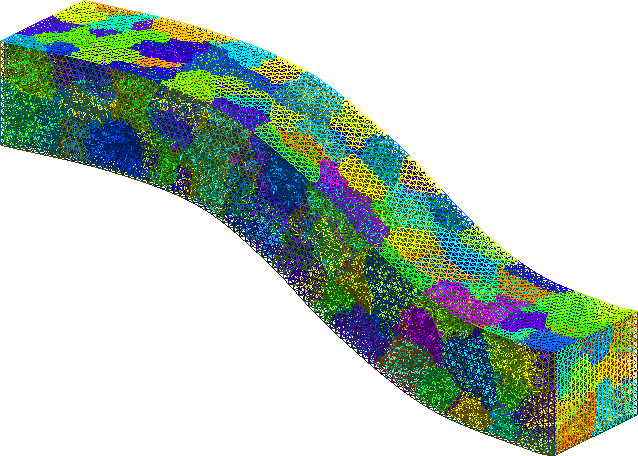}
  \caption{Geometry of the COBRA domain with the aperture in gray (left) and the
    subdomain partition with 160 subdomains (right).}
  \label{fig:cobra}
\end{figure}

\begin{figure}[!h]
  \centering
  \begin{subfigure}[b]{0.45\textwidth}
    \centering
    \includegraphics[width=\linewidth]{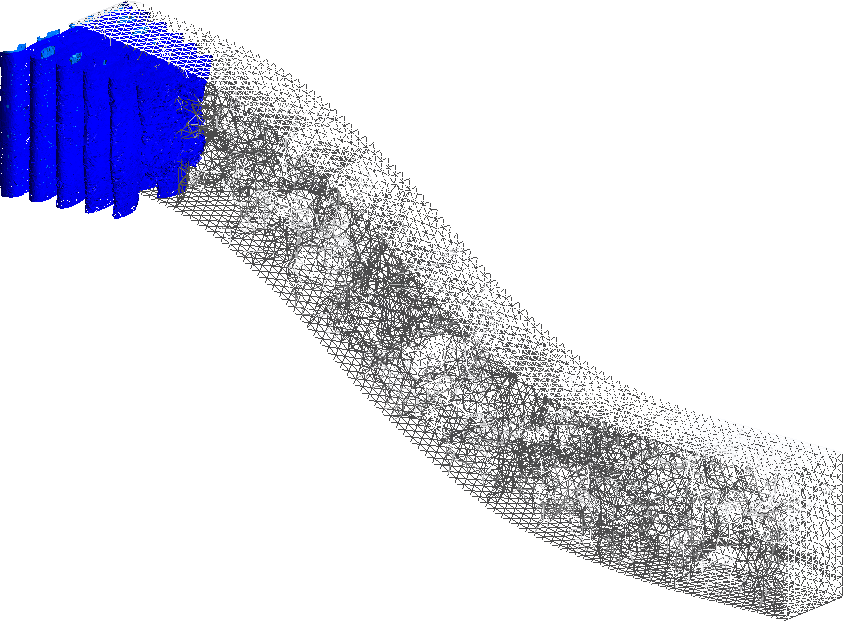}
    \caption{100 iterations}
  \end{subfigure}%
  \begin{subfigure}[b]{0.45\textwidth}
    \centering
    \includegraphics[width=\linewidth]{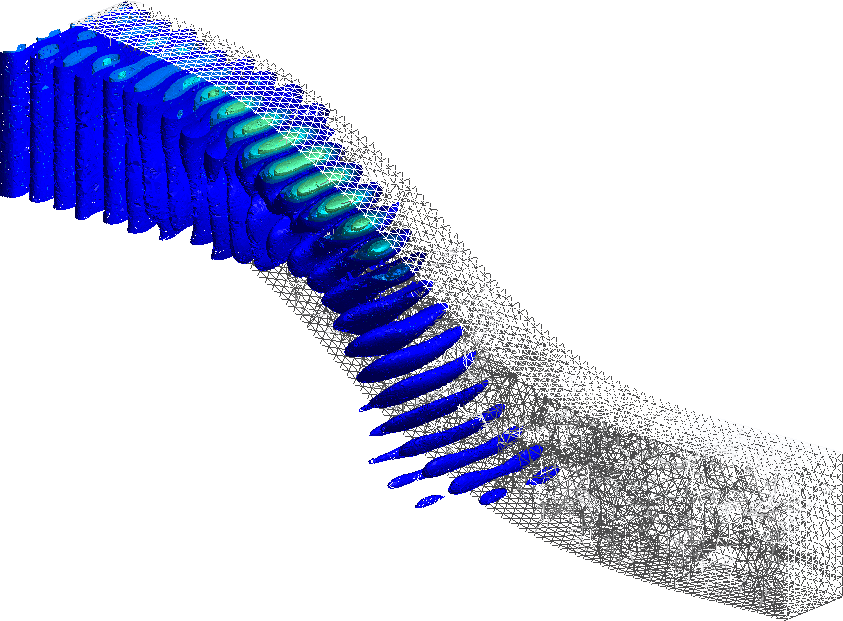}
    \caption{400 iterations}
  \end{subfigure}
  \begin{subfigure}[b]{0.45\textwidth}
    \centering
    \includegraphics[width=\linewidth]{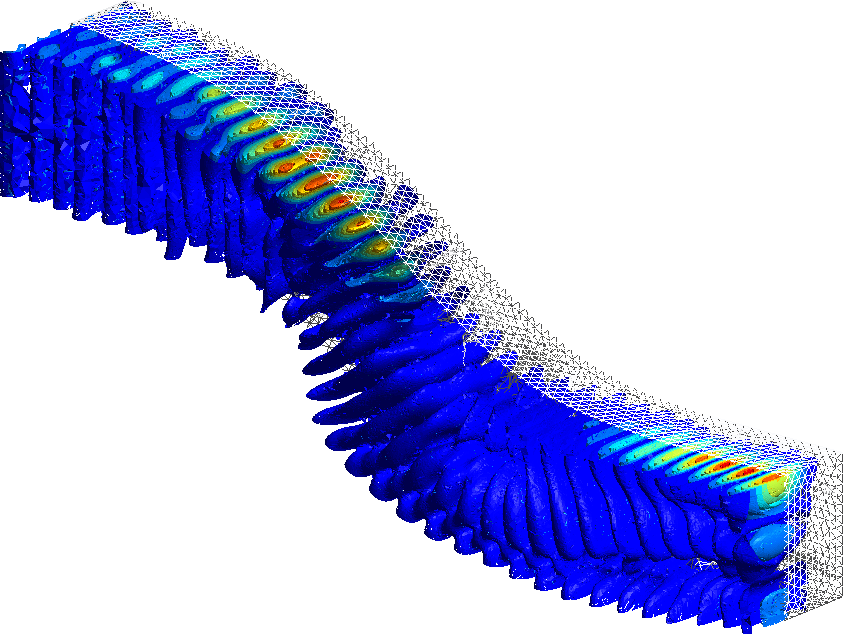}
    \caption{1000 iterations}
  \end{subfigure}
  \begin{subfigure}[b]{0.45\textwidth}
    \centering
    \includegraphics[width=\linewidth]{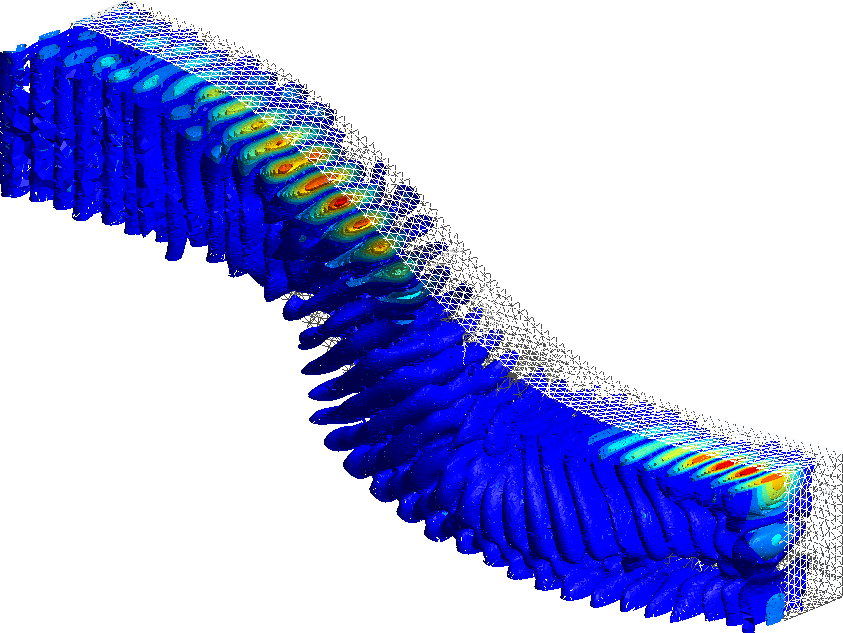}
    \caption{1600 iterations}
  \end{subfigure}
  \includegraphics{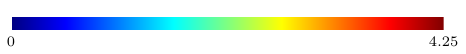}
  \caption{Snapshots of $\left|\Re(\be_h)\right|$ for the COBRA benchmark
    at four different points during the solution process using modal CGNR, cf.
    \S\ref{sect:cobra}.}
  \label{fig:cobra_solution}
\end{figure}

The domain $\Omega$ consists of five continuous segments.
The first segment is a straight rectangular section with a length of \SI{10}{mm}, followed by a circular bend of \SI{35}{\degree} with a radius of \SI{186}{mm}.
The third segment is another straight section measuring \SI{80}{mm} in length, which is then followed by a second circular bend, also \SI{35}{\degree} with a radius of \SI{186}{mm}.
Finally, the domain ends with a straight segment of \SI{100}{mm}.
The domain has a uniform cross-section measuring \SI{84}{mm} by \SI{110}{mm}.
We impose PEC boundary conditions $\bs\rmsub{E}=\bzero$ on all the boundaries except the aperture as shown in Figure~\ref{fig:cobra}.
At the aperture we prescribe an incident free space with a frequency of \SI{10}{GHz} corresponding to $\kappa_r = \SI{209}{m^{-1}}$.
We use a uniform mesh with $h=6\times 10^{-3}$ and polynomial degree $p= 3$ which respects approximately 5 cells per wavelength, resulting in
$13,308,240$ DOFs for the nodal eliminated system \eqref{eq:linsys_lagrange}.

In order to render this computation tractable, we use a \texttt{METIS}
\cite{Karypis_1998} mesh partition with 160 subdomains as depicted in
Figure~\ref{fig:cobra}. This partition was then used on a local cluster with
four nodes, each equipped with 40 Intel Xeon CPU Gold 6230 cores at 2.1 GHz for a
total of 160 \texttt{MPI} ranks.

In Figure~\ref{fig:cobra_solution} we plot the magnitude of the
real part of the electric field for the solution obtained at intermediate solver iterations
for the modal CGNR method. After 2000 iterations the solution stabilizes and we
obtain a profile similar to the one obtained e.g. in \cite{Dolean2015}, with
higher magnitude near the first bent region after the aperture and near the
``closed'' (PEC) side of the cavity.

\begin{figure}[!tb]
  \centering
  \includegraphics[width=0.6\linewidth]{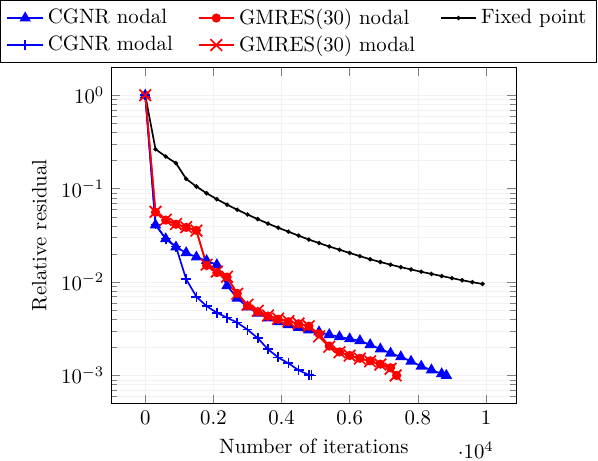}
  \caption{Relative residual history for the COBRA benchmark.}
  \label{fig:cobra_residual}
\end{figure}

The residual history of the different methods is plotted in Figure~\ref{fig:cobra_residual}. We
remark a much slower convergence as compared to the previous test cases, which is reasonable since
the linear system size is also much greater and the configuration is more complex. We impose a tolerance of $10^{-3}$ on the relative
residual for the stopping criteria. The fixed point iteration failed to converge after  15 hours of
wall time corresponding to $4.5\times 10^4$ iterations and a final relative residual of $2.1 \times 10^{-3}$.

  We give more details of the solvers performance in Table~\ref{tab:runtimes}. In terms of iteration
  count, CGNR modal was the fastest to converge, taking approximately 66\% of the iterations of the next faster
  method (GMRES(30) nodal). However, we see that the average time per iteration is higher for both the CGNR variants,
  practically double that of GMRES(30) and fixed point. This is reasonable since CGNR requires the
  action of $\MA$ and $\MA^*$ at each iteration whereas the other methods only require the action of
  $\MA$. There is also a cost associated with the modal versions compared with their nodal
  counterparts, which turned out to be around 4\% of the overall cost of an iteration for both CGNR and
  GMRES(30). We emphasize that these results are preliminary and that further optimizations to the
  code are underway.

\begin{table}[h]
  \centering
  \scalebox{0.95}{
  \begin{tabular}{lccc}
    \hline
    \textbf{Method} & \textbf{\# iterations} & \textbf{Total runtime} & \textbf{Average time/iteration}\\
    \hline
    GMRES(30) nodal & 7379  & 2.74 hours  & \SI{1.34}{s} \\
    GMRES(30) modal & 7818  & 3.02 hours  & \SI{1.39}{s} \\
    CGNR nodal      & 8838  & 6.26 hours  & \SI{2.55}{s} \\
    CGNR modal      & 4876  & 3.59 hours  & \SI{2.65}{s} \\
    Fixed point & X & X & \SI{1.29}{s} \\
    \hline
  \end{tabular}
}
  \caption{Runtime analysis for the COBRA benchmark of \S\ref{sect:cobra} to achieve a relative
  residual of $10^{-3}$. The fixed point iteration failed to achieve this criteria.}
  \label{tab:runtimes}
\end{table}

\section{Conclusion}
\label{sect:conclusion}

In this paper, we developed and analyzed an extension of the CHDG method, originally studied in \cite{modave2023hybridizable} for the Helmholtz equation, to the time-harmonic Maxwell equations.
The main mathematical novelty is the introduction of a vector-valued transmission variable that lives in the tangential trace space of element faces.
The same types of mathematical arguments lead to a hybridized equation of the form $(\opI - \opPi\opS)\bg = \bb$ (Problem~\ref{pbm:trans_operator}).
We prove using similar techniques that the operator $\opPi\opS$ is a strict contraction, and that the local sub-problems arising from the hybridization are well-posed.
A fixed-point iteration can be used to solve the hybridized problem, which is illustrated in our numerical experiments.

Practically, we consider 3D vector cases, while in \cite{modave2023hybridizable} only 2D scalar
cases were presented.
In order to solve the hybridized system efficiently, careful considerations have to be made from an implementation point of view.
In this paper we present a nodal implementation, using ideas borrowed from \cite{hesthaven2007nodal} for discretization and mapping between nodal and modal polynomial bases.
We validate and study variants of our implementation by solving two reference benchmarks, and we also present a large-scale benchmark with a parallel \texttt{C++} code.

We compare iterative strategies with CGNR and (restarted) GMRES iterations based on both nodal and
modal bases, but implemented only with the nodal basis for computational performance as well as ease
of implementation. The modal and nodal versions of the GMRES iterations yield similar results, which
are similar or slightly better than those obtained with the modal CGNR iteration. However, when
using the restarted version of GMRES, modal CGNR slightly outperforms GMRES. We believe that CGNR
can be a competitive approach for practical cases, notably due to its low memory footprint.

In light of the recent work \cite{pescuma2025} where an extension to
heterogeneous media for Helmholtz equation is presented, an obvious extension
would be to consider heterogeneous media in the context of Maxwell's equations.
We are also working to implement a version of the code that is accelerated by
GPUs which could provide further performance gains.

\bibliographystyle{siamplain}
\bibliography{main}

\end{document}